\newtheorem{lem}{Lemma}
\theoremstyle{plain}
\newtheorem{thm}{Theorem}
\newtheorem{cor}{Corollary}[thm]
\newtheorem*{thm*}{Theorem}
\newtheorem{prop}[thm]{Proposition}
\theoremstyle{remark}
\newtheorem*{rmk}{Remark}
\newcommand{\N}{\mathbb{N}}
\newcommand{\R}{\mathbb{R}}
\newcommand{\Wrt}{\mathbb{W}^{1,r}([0;T],\R^d)}
\newcommand{\C}{\mathcal{C}}
\newcommand{\Cont}{\mathcal{C}^0}
\newcommand{\Wr}{\mathbb{W}^{1,r}}
\newcommand{\Hst}{\mathbb{H}^s([0;T],\R^d)}
\newcommand{\Hs}{\mathbb{H}^s}
\newcommand{\Qmin}{Q_{\min}}
\newcommand{\argmin}{\mathrm{argmin}}
\newcommand{\dom}{\mathrm{dom}}
\newcommand{\indfonc}{\mathbbm{1}}
\newcommand{\abs}[1]{\lvert#1\rvert}
\newcommand{\Partentf}[1]{\lfloor#1\rfloor}
\newcommand{\norm}[1]{\left\lVert#1\right\rVert}
\newcommand{\M}{(\mathcal{M}_{\mu^0})}
\newcommand{\Meas}{\mathcal{M}}
\newcommand{\Prob}{\mathcal{P}}
\newcommand{\Lag}[3]{\mathrm{Lag}_{#1}(#2,#3)}
\newcommand{\Li}[1]{\mathrm{Lag}_{#1}}
\newcommand{\bary}[1]{\frac{1}{N}\sum_{i=1}^N\delta_{#1}}
\newcommand{\card}{\mathrm{card}}
\newcommand{\MC}{(\mathcal{M}_{\mu^0})}
\newcommand{\MD}{(\mathcal{M}_{N, \mu^0_N,\epsilon})}
\newcommand{\MDN}{(\mathcal{M}_{N, \mu^0_N,\epsilon_N})}
\newcommand{\MDD}{(\mathcal{M}_{N, \mu^0_N,\delta,\epsilon})}
\newcommand{\MDDN}{(\mathcal{M}_{N, \mu^0_N,\delta_N,\epsilon_N})}
\newcommand{\JD}{J_{\epsilon}}
\newcommand{\JDN}{J_{\epsilon_N}}
\newcommand{\JDD}{J_{\delta,\epsilon}}
\newcommand{\JDDN}{J_{\delta_N,\epsilon_N}}
\newcommand{\Glin}{\Gamma^\mathrm{lin}_{\delta}}
\newcommand{\Qtlin}{\tilde{Q}^\mathrm{lin}_{N,\delta_N}}
\newcommand{\Tlin}{T^\mathrm{lin}_{\delta_N}}
\newcommand{\glin}{\gamma^\mathrm{lin}_{\delta_N}}
\title{Lagrangian discretization of variational mean field games}
\author{Cl\'ement SARRAZIN}
\date{}
\begin{document}
\maketitle

\begin{abstract}
In this article, we introduce a method to approximate solutions of some variational mean field game problems with congestion, by finite sets of player trajectories. These trajectories are obtained by solving a minimization problem similar to the initial variational problem. In this discretized problem, congestion is penalized by a Moreau envelop with the 2-Wasserstein distance. Study of this envelop as well as efficient computation of its values and variations is done using semi-discrete optimal transport. We show convergence of the discrete sets of trajectories toward a solution of the mean field game, under some conditions on the parameters of the discretization.
\end{abstract}

\section{Introduction}

Mean field games were introduced by Lasry and Lions in \cite{lasry2006jeux1,lasry2006jeux2} and, independentely by Caines, Huang and Malhamé in \cite{huang2006large}. In these games, an infinite population of indistinguishable players evolves in a domain while minimizing an energy that depends on the trajectory of the whole population. In a simple case, players, which are represented by curves on the domain $\Omega$, $x\in\mathbb{H}^1([0;T],\Omega)$, are trying to minimize an accumulated energy $$\int_0^T\left[ \frac{\norm{x'(t)}^2}{2}+g(\mu(t,x(t)))+ V(x(t))\right]dt+\Phi(x(T)).$$ 
Here, $g$ is a non-decreasing function on $\R$, penalizing high values of $\mu(t, x(t))$,  the density of the population around player $x$ at time $t$, $V$ and $\Phi$ are scalar functions on $\R^d$. Players are then pushed towards areas where $V$ is low (and, at the end of the trajectory, so is $\Phi$), while also trying to avoid overcrowded areas, where $g(\mu(t,x))$ is too large. At equilibrium, each player is following an optimal path for his/her own energy. The evolution of the population's density $\mu$ is then linked to that of the value function: $$\phi:(t_0,x_0)\mapsto\inf\left\{\int_{t_0}^T \left[\frac{\norm{x'(t)}^2}{2}+g(\mu(t,x(t)))+V(x(t))\right]dt+\Phi(x(T))~\middle |~x(t_0)=x_0\right\}$$
via a mean-field game system (with no-flux conditions for the second equation):
\begin{equation}
	\label{pb:hamil-jac}
	\begin{cases}-\partial_t\phi+\frac{\norm{\nabla\phi}^2}{2}=g(\mu)+V\\
		\partial_t\mu-\nabla.(\mu\nabla\phi)=0\\
		\mu(0,.)=\mu^0,~\phi(T,.)=\Phi\end{cases}
\end{equation}

If we set $f'=g$, the first (Hamilton-Jacobi) equation in this system can be interpreted as $-\partial_t\phi+\frac{\norm{\nabla\phi}^2}{2}-V$ belonging to the subgradient of the (convex) function $f$. Using some tools from convex analysis, one can show that $\mu$, seen as a curve in $\Cont([0;T],\Prob(\Omega))$, is a minimizer for the variational mean field game problem: 
\begin{equation}
	\label{pb:mfg_simplest}
	\inf_{\substack{\partial_t\mu+\nabla.(\mu v)=0\\ \mu(0)=\mu_0}}\int_0^T\int_\Omega\left[\frac{\norm{v(t,x)}^2}{2}\mu(t,x)+f(\mu(t,x))+V(x)\mu(t,x)\right]dxdt+\int_\Omega\Phi(x)\mu(T,x)dx
\end{equation}

This is a nonconvex minimization problem, but it can be made convex with the change of variables $\mu=\mu$, $w=\mu v$. This rewriting as a variational problem was already mentioned in the work of Lasry and Lions, \cite{lasry2006jeux1,lasry2006jeux2}, and we also refer the reader to the course notes on mean field games by Cardaliaguet \cite{cardaliaguet2012notes}, the lecture notes by Santambrogio \cite{santambrogio2020lecture} as well as the survey on variational mean field games by Santambrogio, Carlier and Benamou, \cite{benamou2017variational}.
\newline

In this article, we will consider the more general case of ($\mu$, $v$) minimizing the following global energy:
\hypertarget{def:MFG_eul}{}
\begin{equation*}
	J(\mu,v)=\int_0^T\left[\int_\Omega L(v(t,x))d\mu(t,x)dx+F(\mu(t,.))\right]dt+G(\mu)
\end{equation*}
under the constraints $\partial_t\mu + \nabla\cdot(\mu v)=0$ (with no flux boundary conditions) and the initial distribution $\mu(0,.)=\mu^0\in\Prob(\Omega)$.

\hypertarget{def:expr}{}
The first term in $J$, $\int_{0}^{T}\int_\Omega L(v(t,x))d\mu(t,x)dt$ measures the cost of displacement of $\mu$ following the velocity field $v$ and we refer to it as the "kinetic" term in the energy $J$. We make the assumption that $L: \R^d\to\R$ is a convex continuous function on $\R^d$ which behaves like
$\norm{.}^r$ for some $r>1$. More precisely, there exists $C>0$,
\begin{equation}
	\label{Lagran} 
	\forall p\in\mathbb{R}^d,~\frac{1}{rC}||p||^r-C \leq L(p)\leq \frac{C}{r}||p||^r+C
\end{equation}

In order to stay consistent with the congestion term in mean field game \eqref{pb:mfg_simplest}, we take $F$ to be a convex lower semi-continuous function (for the topology associated with the narrow topology) lower bounded with value $+\infty$ outside of $\Meas_+(\Omega)$ (the space of positive finite measures on $\Omega$). Specifically, on giving $F$ the integral form: 
\begin{equation}
	\label{def:int_form}
	F:\mu\in\Meas(\Omega)\mapsto\begin{cases}
		\int_\Omega f(\mu(x))dx & \text{if $0\leq\mu\ll dx$}\\
		+\infty & \text{otherwise}
	\end{cases}
\end{equation}
we recover the congestion term of \eqref{pb:mfg_simplest}. It is well-known that for \eqref{def:int_form} to define a convex l.s.c. function on $\Meas(\Omega)$ one has to take $f$ convex, l.s.c on $\R$ and superlinear ($\lim_{\abs{x}\to+\infty}f(x)/\abs{x}=+\infty$). Let us mention the simple example:
$$F:\mu\in\Meas(\Omega)\mapsto
\begin{cases}
	0 & \text{if $0\leq\mu\leq dx$}\\
	+\infty & \text{otherwise}
\end{cases}$$
which corresponds to a hard congestion constraint, giving an infinite cost to crowds with more than one player at a given position. For more general $F$, this term models the impact, on the global energy $J$, of congestion in the population, dissuading players from concentrating during their movement  by penalizing high densities. Notice that in the integral form \eqref{def:int_form}, $F$ is finite only on measures absolutely continuous with respect to the Lebesgue measure. We will therefore need to smooth it up in order to compute an analog for discrete populations. This is the point of the Moreau-Yosida regularization defined in Section \ref{part:2} and which we study in Section \ref{part:4}.

Finally, $G$ is a continuous function over $\C([0;T],\Prob(\R^d))$ (again, for the topology associated with the narrow topology), which is lower bounded. By driving the players toward a goal (the trajectories along which $G$ is the lowest) this last term often favors congestion and works in opposition to the congestion term. In our numerical simulations, $G$ will be given as the integral of a potential, at intermediate times ($V$ in the introductory example above) and/or at final time ($\Phi$ in the example). 
\newline

\textbf{Discretization of mean field games:} In Sections \ref{part:1} to \ref{part:3} of this paper, we study a lagrangian discretization of the mean field game problem described above.
In practice, this means reformulating it as a minimization over $\Prob(\Cont([0;T], \R^d))$, then looking for minimizers of a similar problem, but set, this time, on discrete probability measures.
For such measures, the term $\int_0^T F(\mu(t))dt$ could be ill-defined and congestion must be penalized in a different manner. This Lagrangian approach contrasts from the more standard Eulerian ones, where the optimization space is discretized as curves on a subspace of more regular densities evolving with time. Said Eulerian approaches do not force a change in the congestion term, but require an adaptation of the dynamic of the population via more complex time discretizations (we refer the reader to \cite{achdou2010mean} and \cite{briceno2018proximal} for examples of such approaches). Let us also mention that Lagrangian discretizations were also studied in \cite{ruthotto2020machine}, but from the point of view of the dynamical system \eqref{pb:hamil-jac} and therefore requires a different treatment of the ill-definition of the congestion term, which is done using the value function from the Hamilton-Jacobi equation.

The main goal of this article is to show that replacing the functional $F$ with a regularized version, in the form of a Moreau envelope in the Wasserstein space:
$$F_\epsilon(\mu):=\inf_{\rho\in\Prob(\Omega)}\frac{W_2^2(\rho,\mu)}{2\epsilon}+F(\rho)$$
allows us to build our discretized crowd motion as a solution of the discretized problem:
\begin{equation}
	\label{pb:euler}\inf \left\{ \int_0^T\left[\int_\Omega L(v(t,x))d\mu(t,x) + F_\epsilon(\mu(t))\right]dt+G(\mu)~\middle|~ \mu\in\Cont([0;T],\Prob_N(\R^d)),~\mu(0)=\mu^0_N\right\}
\end{equation}
Here, minimization is done on the space of continuous curves valued into the space of discrete uniform probability measures on $\R^d$ , $\Prob_N(\R^d)$, for which there exists a vector field $v$ such that $\partial_t \mu+\nabla\cdot(\mu v)=0$. Note that this problem is non-convex, if only because the space $\Prob_N(\R^d)$ is not, however, after discretization of the time integrals above, it is a finite dimensional minimization problem, and we can look for local minimizers by following a quasi-Newton algorithm.
\newline

In Sections \ref{part:2} and \ref{part:3}, we show a general narrow convergence result for the discrete minimizers of (\ref{pb:euler}), as $N$ goes to infinity and $\epsilon$ to $0$. Such a convergence is very reminiscent of $\Gamma$-convergence and guarantees, in particular, that we recover the continuous solutions, as the number of players goes to infinity. In particular cases of functional $F$, one can extract from the Moreau envelop a density that approximates the optimal density at all times, in a stronger $\mathbb{L}^p$ sense, as shown at the end of Section \ref{part:2}. Such a discretization is applied, in \cite{leclerc2020lagrangian}, to the similar problem of Wasserstein gradient flow on an energy featuring the same kind of congestion penalizations $F$. However, in this particular case, the same convergence results as those we claim above can only be proven under an assumption on the regularized penalization that is deemed "unnatural" by the authors. 
\newline

\textbf{Moreau envelope in the Wasserstein space:}
In Section \ref{part:4}, we further study the properties of these Moreau envelopes in the 2-Wasserstein space. Indeed, their restrictions to the space of uniform discrete measures on $N$ points enjoy some good regularity properties (when seen as functions of the positions of the Dirac masses, in $(\R^d)^N$). In Section \ref{part:6}, we give explicit expressions for their derivatives, which are then used to compute approximate minimizers of these discrete problems.

Values of the Moreau envelope can be numerically computed using a Newton algorithm (as the corresponding problem can be cast as a concave problem), whereas the minimizing discrete trajectories are approximated using an implementation of the L-BFGS algorithm on the finite dimensional problem after time discretization. The images from section \ref{part:6} can be obtained using the code available on Github\footnote{\url{https://github.com/CSarrazin-prog/Congested_MFG.git}}.
\newline

\textbf{Acknowlegments:}
The author would like to thank his PhD advisors Quentin Mérigot and Filippo Santambrogio for their numerous insights and advice. He also would like to thank Hugo Leclerc for his work on the pysdot\footnote{\url{https://github.com/sd-ot/pysdot}} library for numerical computations of integrals over Laguerre cells, as well as Jean-Marie Mirebeau for his work on approximating the solutions of Eikonal equations, implemented in the hfm\footnote{\url{https://github.com/Mirebeau/HamiltonFastMarching.git}, based on \cite{mirebeau2017anisotropic}} library. We make ample use of these libraries in Section \ref{part:6} when we compute our approximated optimal trajectories.
Finally, he would like to thank the anonymous referees for their corrections and suggestions which helped greatly in making this manuscript clearer.

This work has been supported by Agence nationale de la recherche (ANR-16-CE40-0014 - MAGA - Monge-Ampère et Géométrie Algorithmique).

\section{Preliminaries}

\label{part:Not}

To guarantee existence of minimizers for our minimization problems, we make the assumption that the various functions we use are \textit{lower semi-continuous (l.s.c)}, often according to the narrow convergence described below. That is to say, if $x_n$ converges to $x$ (for the corresponding topology), then $H(x)\leq \lim_{n\to\infty}H(x_n)$ for the l.s.c function $H$. A standard way to show that such functions admit minimizers according to a given set of closed constraints is to take a minimizing sequence (of admissible points) and extract from it a converging subsequence. The limit point is thus a minimizer for the problem.
\newline

For a Polish space $X$,  the space $\mathcal{M}(X)$ of (signed) finite Radon measures over $X$ can be endowed with the topology of the \textit{narrow convergence}, which is defined by duality with the space of continuous bounded functions on $X$. It is in this sense in particular that we express the regularity in time of solutions to the continuity equation mentioned in Theorem \ref{prop:AGS_rep}.
\newline

Given $\mu\in\Prob(X)$ and $T:X\to Y$ another Polish space, one defines the \textit{push-forward} (or \textit{image}) \textit{measure of $\mu$ by T}, which we write $T\#\mu$, as verifying: for any $\phi\in\C_b(Y)$,
$$\int_Y\phi(y)d(T\#\mu)(y)=\int_X\phi(T(x))d\mu(x)$$ 
We use in particular this notation when dealing with a probability measure on $\C([0;T],\R^d)$ to access the corresponding measure at time $t$, $e_t\#Q$. Here, the map used is the evaluation at time $t\in[0;T]$, $e_t:\gamma\in\C([0;T],\R^d)\mapsto\gamma(t)$.
\newline

In the specific case where the Polish space $X$ is compact, the narrow topology described above is metrizable, using the celebrated Wasserstein distance which derives from the notion of optimal transport between measures:
\newline

Given $\mu,\rho\in\Prob(X)$ and a cost function $c: X^2\mapsto \R\cup{+\infty}$ which is lower semi-continuous and bounded from below, the \textit{optimal transport cost from $\mu$ to $\rho$ according to $c$} is:
\begin{equation}
	\label{def:OT_prim}
	I_c(\mu,\rho):=\inf\left\{\int_{X^2}c(x,y)d\pi(x,y)~\middle|~\pi\in\Pi(\mu,\rho)\right\}
\end{equation}
where the \textit{transport plans} $\pi$ are coupling probabilities, with marginals $\mu$ and $\rho$, i.e. the infimum is taken over $\Pi(\mu,\rho):=\left\{\pi\in\Prob(X^2)~\mid~p_x\#\gamma=\mu,~p_y\#\gamma=\rho\right\}$ ($p_x$ and $p_y$ being the projectors on the first and second coordinates for $X^2$). 
With these hypotheses on $X$ and $c$, this infimum is always attained by an optimal transport plan $\pi$.

This convex minimization problem over $\Prob(X^2)$ admits a dual formulation as a concave maximization problem:
$$I_c(\rho,\mu)=\sup\left\{\int_X\phi d\mu+\int_X\psi d\rho\middle|\phi,\psi\in\Delta^c(X)\right\}$$
over the space of dual costs:
$$\Delta^c(X)=\left\{(\phi,\psi)\in(\Cont_b(X))^2:\forall (x,y)\in X^2,\phi(x)+\psi(y)\leq c(x,y)\right\}$$
Notice that one can eliminate one of the variables by taking, for each $\phi\in\Cont_b(X)$ the optimal $\psi=\phi^c:=\inf_{x\in X} c(x,.)-\phi(x)$, such that $(\phi,\phi^c)\in\Delta^c$ and
\begin{equation}
	\label{def:OT_dual}
	I_c(\rho,\mu)=\sup\left\{\int_X\phi d\mu+\int_X\phi^c d\rho\middle|\phi\in\Cont_b(X)\right\}
\end{equation}

When they exist, the optimal $\phi$ and $\psi=\phi^c$ are called \emph{Kantorovich potentials} for the transport from $\mu$ to $\rho$, and symmetrically, $\phi=\psi^c$. Functions that are the $c$-transform of functions in $\Cont_b$ ($\psi=\phi^c$) are said to be \textit{$c$-concave} so that the optimisation in $\phi\in\Cont_b(X)$ above can in fact be done on $c$-concave functions only. 

For a very complete overview of Optimal transport and its resolution in general cases, we refer the reader to the books of Villani, \cite{villani2003topics} and \cite{villani2008optimal} as well as the one of Santambrogio \cite{santambrogio2015optimal}.
\newline

The Wasserstein distance is obtained  from the optimal cost $I_c$, with $c$ being a power of the distance $d$ on $X$. Taking the appropriate root of this transport cost naturally gives a distance between probability measures (albeit only on a subset of $\Prob(X))$: 

Let $c_p:x,y\in X^2\mapsto d(x,y)^p$, $p>1$. For $\mu,\rho\in \Prob(X)$, the \textit{p-Wasserstein distance} between $\mu$ and $\rho$ is defined as:
\begin{equation}
	\label{def:Wasserstein}
	W_p(\mu,\rho):=\left(I_{c_p}(\mu,\rho)\right)^{1/p}
\end{equation}

This function defines a distance on the set of probabilities over $X$, with finite $p-$th order moment. Furthermore, in the case where $X$ is compact, $W_p$ metrizes the narrow topology on $\Prob(X)$.
\newline

In this paper, we focus on semi-discrete optimal transport where one of these measures (let us say $\mu$ with our previous notations) is supported on a finite point cloud $Y=y_1,...,y_N\in(\R^d)^N$, whereas $\rho$ admits a density with respect to the Lebesgue measure, on the domain $\Omega$:
$$\mu=\sum_{i=1}^N\mu_i\delta_{y_i},\qquad \rho\in\Prob(\Omega)\cap\mathbb{L}^1(\Omega)$$

Furthermore, we only consider a cost $c(x,y):=\norm{x-y}^2$ in these semi-discrete cases, or this cost multiplied by a constant. We then find ourselves in a very simple case of so-called \emph{Monge transport}, where an optimal transport plan from  $\rho$ (provided it has finite second order moment) to $\mu$, is induced by a transport map $T$ (\cite{brenier1991polar}, \cite{gangbo1996geometry}): $ \pi=(Id,T)\#\rho$. $\Omega$ is in return partitioned into \textit{Laguerre cells}, $\Li{i}=T^{-1}(y_i)$, $i=1\dots N$. On the other hand, these cells can also be computed using Kantorovich potentials, $\phi$, $\phi^c=\inf_i c(.,y_i)-\phi(y_i)$, for this optimal transport. Indeed, setting (for readability purposes) $\phi_i:=\phi(y_i)$ and $\Phi:=(\phi_1,\dots,\phi_N)$, one has the characterization 
\begin{equation}
	\label{def:Laguerre}
	\Li{i}=\Lag{i}{Y}{\Phi}:=\{x\in\Omega~|~c(x,y_i)-\phi_i\leq c(x,y_j)-\phi_j\text{ for all j}\}
\end{equation}
for a suitable set of weights $\Phi$, and the dual formulation \eqref{def:OT_dual} rewrites:
\begin{equation}
	\label{eq:OT_dual_discr}
	I_c(\rho,\mu)=\sup\left\{\sum_{i=1}^N\phi_i\mu_i+\sum_{i=1}^{N}\int_{\Lag{i}{Y}{\Phi}}c(x,y_i)-\phi_i d\rho(x)\middle|\Phi\in\R^N\right\}
\end{equation} 
Optimality conditions for this concave problem in $\Phi\in\R^N$ simply state the mass conservation during transport (see \cite{MERIGOT2021133}, proposition 37 and below for a proof): 
\begin{equation}
	\label{eq:optim_cond_OT_semid}
	\rho(\Lag{i}{Y}{\Phi})=\mu_i\text{, for every i}.
\end{equation} The optimal $\Phi$ can be found using a damped Newton algorithm on the weights $\phi$, as described in \cite{kitagawa2016convergence}.
\newline

\textbf{Notation:} \hypertarget{def:Gamma} In this article, $\Omega$ will always denote a smooth compact domain in $\R^d$, and will be the domain to which our particles are restrained in the continuous setting. However, we will also consider trajectories going outside the domain and therefore, our variational optimization problems will be written with the space  $\Gamma:=\C([0;T],\R^d)$ as the space of possible trajectories, where $T>0$ is the total (finite) time of the movement of the particles. In our discretization, the minimum is taken on a subset of the set of uniform discrete probability measures on $\Gamma$, 
\begin{equation}
	\label{def:unif_discr}
	\Prob_N(\Gamma):=\left\{\frac{1}{N}\sum_{i=1}^{N}\delta_{\gamma_i}~\middle|~\gamma\in\Gamma^N\right\}.
\end{equation}
Similarly, the Moreau envelope $F_\epsilon$ is defined in part \ref{part:4} in order to be finite at probability measures in $\Prob_N(\R^d):=\{\frac{1}{N}\sum_{i=1}^{N}\delta_{y_i}~\mid~Y\in(\R^d)^N\}$.

More generally, when manipulating discrete measures, we will denote the corresponding point clouds in $(\R^d)^N$ using capital roman letters ($X$ and $Y$) and an associated tuple of weights, defining Laguerre cells for a semi-discrete dual form, using capital greek letters ($\Phi$ and $\Psi$). The individual points or weights will be denoted by the corresponding lower case letters ($y_i$ for points in $Y$, $\phi_i$ for weights in $\Phi$¸).
\newline

\hypertarget{def:leg}{}
For a convex proper l.s.c function $f$ on a convex space $E$ in duality with a space $E^*$, we define the subdifferential of $f$ at $x\in E$ via
$$\partial f(x):=\{p\in E^*|~\forall y\in E,~ f(y)\geq f(x)+p\cdot(y-x)\}$$
and the Legendre transform of $f$ at $p\in E^*$:
$$f^*(p)=\sup_{x\in E}p\cdot x-f(x)$$
Similarly, we will briefly use in Section \ref{part:6} the notion of superdifferential for a concave function $f$: $$\partial^+f(x)=-\partial(-f)$$
where $-f$ is now a convex function and $\partial$is the subgradient introduced above.
Finally, to simplify some lengthy computations, we will write $\lesssim$ for inequalities up to a multiplicative positive constant which only depends on the domain $\Omega$ and its dimension $d$.

\section{The lagrangian setting and the continuous case}

\label{part:1}

A curve in $\C^0([0;T],\Prob(\Omega))$ solution of the variational mean field game described in \hyperlink{def:MFG_eul}{Part 1} can be seen as a probability measure in $\Prob(\Gamma)$, where $\Gamma$ is the space of possible trajectories defined in \hyperlink{def:Gamma}{Part 2}. This correspondance is stated in the following theorem (see Theorems 4 and 5 of \cite{lisini2007characterization}). It gives a very natural way of seeing the density of the population at time $t$, $\mu(t)$, as the image of the distribution of the trajectories through the evaluation map at the same time, $e_t$ (see section \ref{part:Not}), and $v(t)$ giving the distribution of velocities  at time $t$, for these trajectories:

\begin{thm}
	\label{prop:AGS_rep}
	Let $\mu\in \C([0;T];\Prob(\Omega))$, be solution (in the sense of distributions) of the continuity equation $\partial_t\mu +\nabla\cdot(\mu v)=0$, with a $\mathbb{L}^r(d\mu_t dt)$ velocity vector field $v$ and $r>1$. Then there exists a probability $Q\in\mathcal{P}(\Gamma)$ such that:
	\begin{enumerate}
		\item $Q$-almost every $\gamma\in\Gamma$ is in $\mathbb{W}^{1,r}([0;T];\Omega)$ and satisfies $\gamma'(t)=v(t,\gamma(t))$ for $\mathcal{L}^1$-almost every $t\in[0;T]$.
		\item $\mu(t)=e_t\#Q$ for every $t \in [0;T]$.
	\end{enumerate}
	
	Conversely, any $Q\in\mathcal{P}(\Gamma))$ which satisfies (1) $Q$-almost every $\gamma\in\Gamma$ is in $\mathbb{W}^{1,r}([0;T],\Omega)$ and (2) $\int_\Gamma \norm{\gamma'}_{\mathbb{L}^r}^rdQ(\gamma)<+\infty$ induces an absolutely continuous curve in $\Cont([0;T];\Prob(\Omega))$, solution to a continuity equation, via $\mu(t)=e_t\#Q$.
\end{thm}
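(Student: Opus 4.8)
The plan is to treat the two implications separately, starting with the easier converse. Given $Q\in\Prob(\Gamma)$ with finite action $A(Q):=\int_\Gamma\norm{\gamma'}_{\mathbb{L}^r}^r\,dQ(\gamma)<+\infty$, I would set $\rho(t):=e_t\#Q$ and verify the two claimed properties. Continuity in the narrow topology follows from the uniform-in-$Q$ modulus of continuity of $\mathbb{W}^{1,r}$ curves: Hölder's inequality gives $\abs{\gamma(t)-\gamma(s)}\leq\abs{t-s}^{1-1/r}\norm{\gamma'}_{\mathbb{L}^r}$, and using $(e_s,e_t)\#Q$ as a competitor transport plan yields $W_r(\rho(s),\rho(t))\lesssim\abs{t-s}^{1-1/r}A(Q)^{1/r}$, hence continuity. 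To produce the velocity field and the continuity equation, I would disintegrate: for a test function $\phi\in\mathcal{C}_c^\infty$, differentiate $t\mapsto\int\phi\,d\rho(t)=\int_\Gamma\phi(\gamma(t))\,dQ(\gamma)$ under the integral to obtain $\int_\Gamma\nabla\phi(\gamma(t))\cdot\gamma'(t)\,dQ(\gamma)$, and define $v(t,\cdot)$ as the barycentric projection (conditional expectation under $Q$) of $\gamma'(t)$ onto $\{\gamma(t)=x\}$. Jensen's inequality then gives $v\in\mathbb{L}^r(d\rho_t\,dt)$ and the displayed expression is exactly the weak continuity equation.

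For the direct implication I would follow Ambrosio's superposition principle. First mollify in space: this produces smooth densities $\rho^\epsilon$ and Lipschitz-in-space fields $v^\epsilon=(\rho v)*\eta_\epsilon/(\rho*\eta_\epsilon)$ still solving a continuity equation, with $v^\epsilon\to v$ in $\mathbb{L}^r(d\rho_t\,dt)$ and with action controlled by $A$ through Jensen. For each fixed $\epsilon$, Cauchy--Lipschitz gives a well-defined flow $X^\epsilon$, and I would define $Q^\epsilon$ as the push-forward of $\rho^\epsilon(0)$ under the map $x\mapsto X^\epsilon_\cdot(x)\in\Gamma$, i.e.\ the law of the characteristics issued from $\rho^\epsilon(0)$; the method of characteristics guarantees $e_t\#Q^\epsilon=\rho^\epsilon(t)$.

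The next step is compactness. The uniform action bound $\sup_\epsilon\int_\Gamma\norm{\gamma'}_{\mathbb{L}^r}^r\,dQ^\epsilon<+\infty$ provides, via the Hölder modulus above, uniform equicontinuity of the curves, while tightness at the initial time follows from $\rho^\epsilon(0)\to\mu^0$; by a Prokhorov together with Ascoli--Arzel\`a argument the family $\{Q^\epsilon\}$ is tight in $\Prob(\Gamma)$. I would extract a narrowly convergent subsequence $Q^\epsilon\rightharpoonup Q$. Lower semicontinuity of the action then forces $A(Q)<+\infty$, so $Q$-a.e.\ $\gamma\in\mathbb{W}^{1,r}$, and continuity of each $e_t$ gives $e_t\#Q=\lim e_t\#Q^\epsilon=\rho(t)$.

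The main obstacle is the final step: identifying $Q$ as concentrated on solutions of $\gamma'=v(\gamma)$ for the original, merely $\mathbb{L}^r$, field. Since $v$ cannot be evaluated pointwise along individual curves, I would introduce, for a continuous bounded approximation $w$ of $v$, the truncated defect
$$c^w_t(\gamma):=\min\left(\abs{\gamma(t)-\gamma(0)-\int_0^t w(s,\gamma(s))\,ds},1\right),$$
which is bounded and continuous on $\Gamma$ and hence passes to the limit along $Q^\epsilon\rightharpoonup Q$. Using $e_s\#Q^\epsilon=\rho^\epsilon(s)$ to rewrite integrals of $\abs{v^\epsilon-w}$ along curves as spatial integrals against $\rho^\epsilon_s$, together with $v^\epsilon\to v$ and $w\to v$ in $\mathbb{L}^r(d\rho_t\,dt)$, I would show that $\int_\Gamma c^w_t\,dQ$ is arbitrarily small, then send $w\to v$ and remove the truncation to conclude $\gamma(t)=\gamma(0)+\int_0^t v(s,\gamma(s))\,ds$ for $Q$-a.e.\ $\gamma$, i.e.\ $\gamma'(t)=v(t,\gamma(t))$ for $\mathcal{L}^1$-almost every $t$. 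This interchange of the mollification limit with the approximation of $v$ by continuous fields, controlled purely through the $\mathbb{L}^r(d\rho_t\,dt)$ norm, is the delicate point of the argument.
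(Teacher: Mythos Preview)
The paper does not prove this theorem at all: it is stated as a known result and attributed to Theorems~4 and~5 of Lisini \cite{lisini2007characterization}. So there is nothing in the paper to compare your argument against beyond the citation.

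That said, your sketch is a faithful outline of the standard \emph{Euclidean} proof of the superposition principle, essentially the Ambrosio argument from \cite{ambrosio2008gradient}, Chapter~8. The converse is handled correctly via the barycentric projection and Jensen; the direct implication via mollification, Cauchy--Lipschitz flows, tightness, and the defect functional $c^w_t$ is the right skeleton, and you correctly flag the identification step as the subtle one. One technical caveat you gloss over: you mollify in space on a bounded domain $\Omega$, which requires either extending $(\rho,v)$ to $\R^d$ or handling the boundary carefully so that the regularized flow stays admissible; this is routine but not free. Lisini's proof, by contrast, is genuinely different in that it works directly in metric measure spaces (no mollification or ODE flow available), building $Q$ from a tightness argument on piecewise-geodesic interpolants of optimal transport plans between the $\rho(t_i)$ on a fine time grid. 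Your approach is more concrete in $\R^d$ and yields the ODE $\gamma'=v(\gamma)$ directly; Lisini's is more general but gives the curve-level characterization via the metric derivative rather than an explicit velocity field.
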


For the rest of this article, we fix an initial distribution of players, $\mu^0\in \Prob(\Omega)$, that admits a density with respect to the Lebesgue measure on $\Omega$.
The variational mean-field game we consider can be rewritten as a minimization problem over $\Prob(\Gamma)$, using the representation of Theorem \ref{prop:AGS_rep}:
\begin{align*}
	\label{def:MFG_lag}
	&\MC:\inf\left\{J(Q)~\mid~ Q\in\Prob(\Gamma)\text{ s.t. }e_0\#Q=\mu^0\right\}\\
	&\hbox{ with } J(Q)=\int_0^T\int_{\Gamma} L(\gamma'(t))dQ(\gamma)dt + \int_0^T F(e_t\#Q) dt +  G(Q). 
\end{align*}
For clarity's sake, let us recall and, in the case of the functional $G$, restate, the hypotheses which we make on each term of $J$:
\begin{itemize}
	\item The kinetic and the congestion terms have simply been replaced by their corresponding equivalents on measures on $\Gamma$. $L$ therefore stays the same convex continuous function on $\R^d$ verifying inequalities \eqref{Lagran} and $F$ is the same convex function, now penalizing $e_t\#Q$. Notice that in order for these two terms to be finite, $Q$ must verify the converse implication in Theorem \ref{prop:AGS_rep} and therefore induce an admissible pair $(\mu,v)$ for the eulerian formulation \eqref{pb:mfg_simplest}, and conversely so. Let us mention here that we will be using the abuse of notation, for $\gamma\in\Gamma$,  $$L(\gamma'):=\begin{cases}
		\int_0^TL(\gamma'(t))dt & \text{ if }\gamma\in\Wrt\\
		+\infty & \text{ otherwise}
	\end{cases}$$ in the future as no confusion should arise.
	\item The potential term $G$ was defined previously on $\Cont([0;T],\Prob(\R^d))$. Rather than to define a function on $\Prob(\Gamma)$ which would behave like $G$ when the correspondance of Theorem \ref{prop:AGS_rep} is verified and have to deal with measures where it does not, we simply will consider $G$ to now be a new continuous function, this time on $\Prob(\Gamma)$. In most applications, the corresponding function on $\Cont([0;T],\Prob(\R^d))$ would be easy to obtain, should one wish to go back to the Eulerian setting.
\end{itemize}

Let us briefly recall why this problem admits minimizers, using to the direct method in calculus of variations. This also gives us existence of minimizers for the eulerian settings, using the correspondence of Theorem \ref{prop:AGS_rep}.

\begin{prop}
	\label{prop:exist_cont}
	The functional $J$ is l.s.c on $\Prob(\Gamma)$, and $\MC$ admits minimizers.
\end{prop}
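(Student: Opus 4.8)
The plan is to apply the direct method in the calculus of variations: take a minimizing sequence $(Q_n)$ for $J$ over the admissible set $\{Q\in\Prob(\Gamma)\mid e_0\#Q=\mu^0\}$, extract a narrowly convergent subsequence with limit $Q^\star$, check that $Q^\star$ is still admissible, and conclude by lower semicontinuity that $J(Q^\star)\le\liminf J(Q_n)=\inf J$, so that $Q^\star$ is a minimizer. The two genuine ingredients are therefore \emph{compactness} of a minimizing sequence and \emph{lower semicontinuity} of $J$.

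For compactness I would first discard the trivial case $\inf J=+\infty$ and assume the minimizing sequence has $J(Q_n)$ bounded. Since $F\ge 0$ (or at least bounded below, as is $G$), the kinetic term $\int_\Gamma L(\gamma)\,dQ_n$ is bounded, and the coercivity lower bound in \eqref{Lagran}, namely $L(p)\ge \frac{1}{rC}\norm{p}^r-C$, gives a uniform bound on $\int_\Gamma\norm{\gamma'}_{\Lr}^r\,dQ_n(\gamma)$. This $r>1$ moment bound is exactly what yields tightness: the marginal constraint $e_0\#Q_n=\mu^0$ fixes the starting point law, and the uniform $\Wr$-energy bound, via a Markov/Chebyshev argument together with the compact embedding $\Wr([0;T],\R^d)\hookrightarrow\C([0;T],\R^d)$ (Arzel\`a--Ascoli, using the uniform H\"older control $\abs{\gamma(t)-\gamma(s)}\le \abs{t-s}^{1-1/r}\norm{\gamma'}_{\Lr}$ that an $\Wr$ bound provides), produces compact sets in $\Gamma$ carrying most of the mass of every $Q_n$. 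Hence $(Q_n)$ is tight, and by Prokhorov's theorem a subsequence converges narrowly to some $Q^\star\in\Prob(\Gamma)$.

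Admissibility of the limit is easy: the map $Q\mapsto e_0\#Q$ is continuous for narrow convergence (pushforward by the continuous map $e_0$), so $e_0\#Q^\star=\mu^0$. The lower semicontinuity of $J$ is the heart of the matter and is the step I expect to be the main obstacle, since it must be checked termwise. The term $G$ is l.s.c.\ (indeed continuous) by hypothesis. For the kinetic term $\int_\Gamma L(\gamma)\,dQ$, since $\gamma\mapsto L(\gamma)=\int_0^T L(\gamma(t))\,dt$ is a nonnegative (after the harmless additive constant from \eqref{Lagran}) l.s.c.\ functional on $\Gamma$ that is bounded below, lower semicontinuity under narrow convergence follows from the standard fact that integration of a nonnegative l.s.c.\ integrand is l.s.c.\ for narrow convergence. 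The delicate piece is $\int_0^T F(e_t\#Q)\,dt$: for each fixed $t$, $Q\mapsto e_t\#Q$ is narrowly continuous and $F$ is assumed l.s.c.\ for the narrow topology, so $Q\mapsto F(e_t\#Q)$ is l.s.c.; applying Fatou's lemma to the time integral (legitimate because $F$ is bounded below) then transfers this to $\int_0^T F(e_t\#Q)\,dt$.

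Combining the three termwise bounds gives $J(Q^\star)\le\liminf_n J(Q_n)$. Since $(Q_n)$ was minimizing and $Q^\star$ is admissible, we conclude $J(Q^\star)=\inf J$, so a minimizer exists. The only subtlety worth flagging is the interplay between the possibly singular behaviour of $F$ (its value $+\infty$ on measures not absolutely continuous, or outside the box constraint in the hard-congestion case) and the narrow convergence of the time-marginals; but this is precisely absorbed by the assumed narrow lower semicontinuity of $F$, so no extra regularity of the marginals is needed.
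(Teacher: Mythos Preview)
Your proposal is correct and follows essentially the same route as the paper's proof: both argue termwise lower semicontinuity (kinetic term via l.s.c.\ of $\gamma\mapsto L(\gamma)$, congestion term via narrow continuity of $e_t\#$ plus l.s.c.\ of $F$ and Fatou, $G$ by hypothesis), and both obtain tightness of a minimizing sequence from the uniform bound on $\int_\Gamma L\,dQ_n$ together with the H\"older/Arzel\`a--Ascoli compactness of sublevel sets of $L$ in $\Gamma$ (with the initial-point constraint), concluding via Prokhorov. The only cosmetic difference is that the paper cites a result from Giusti for the kinetic l.s.c.\ and packages the tightness argument by directly defining the compact sets $K_C=\{\gamma\in\Gamma\mid L(\gamma)\le C,\ \gamma(0)\in\Omega\}$, whereas you spell out the same mechanism via the $\Wr$ moment bound and Chebyshev.
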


In the rest of this paper, $\Qmin$ will always denote a (any) minimizer of Problem $\M$.

\begin{proof} We first prove that $J$ is l.s.c by treating its three terms separately.
	Theorem 4.5 of \cite{giusti2003direct} and the bounds in \eqref{Lagran} on $L$ directly imply the lower-semicontinuity of the kinetic energy  on $\Prob(\Gamma)$, $Q\mapsto\int_{\Gamma}L(\gamma')dQ(\gamma)$.
	
	We prove the lower semi-continuity of $Q\mapsto\int_\Gamma F(e_t\#Q)dt$ directly. Let $(Q_n)$ be a 
	sequence converging to $Q_\infty$ in $\Prob(\Gamma)$. By continuity of $e_t$ on $\Gamma$, for every t, $e_t\#Q_n$ narrowly converges to $e_t\#Q_\infty$ as $n$ goes to infinity.
	Thus, by lower semi-continuity of $F$ and Fatou Lemma, we get as desired $$\int_0^T F(e_t\#Q_\infty)dt  \leq \int_0^T \liminf_{N\to\infty}{F(e_t\#Q_N)}dt\leq \liminf_{N\to\infty}\int_0^T F(e_t\#Q_N)dt$$
	Finally, $G$ is continuous (and thus l.s.c) by assumption. 
	
	To show existence of minimizers for $\M$, we first note that  for any upper bound $C>0$, the set 
	$$K_C:=\{\gamma\in\Gamma \mid L(\gamma')\leq C,~\gamma(0)\in\Omega\}$$
	is compact, which follows from Arzela-Ascoli's theorem and from the fact that any curve in $K_C$ is Hölder-continuous. If we take a minimizing sequence $(Q_n)_n$ for $\M$ we notice that it is tight since for any $n$ and $C$,
	$$Q_n(\Gamma\backslash K_C)<\frac{J(Q_n)}{C}.$$
	
	Using Prokhorov's theorem, we can extract from it a sequence converging to a $Q_\infty\in\mathcal{P}(\Gamma)$ for the narrow topology (in particular, $e_0\#Q_\infty=\mu^0$).
	But then, by lower-semi-continuity, $Q_\infty$ is a minimizer for our problem since  
	$$J(Q_\infty)\leq \liminf\limits_{n\to\infty} J(Q_n)=\inf\left\{J(Q)~\mid~ Q\in\Prob(\Gamma)\text{ s.t. }e_0\#Q=\mu^0\right\}.\qedhere$$
\end{proof}

 \section{Space discretization in $\Prob(\Gamma)$}

\label{part:2}

Following the lagrangian point of view of Theorem \ref{prop:AGS_rep}, we wish to approximate solutions of the previous continuous problem, $\M$ by uniform discrete probability measures $Q_N\in\Prob_N(\Gamma)$, with a fixed initial distribution $e_0 \# Q_N = \mu^0_N\in\Prob_N(\R^d)$ (see Definition \ref{def:unif_discr} and below). For such uniform distributions of trajectories, the value of $J$ defined above can be $+\infty$, even for measures very close to a minimizer. 

We avoid this problem by replacing the congestion term by a regularized version of it, (which behaves well for discrete probability distributions): in a Hilbert space $H$, the Moreau envelope of a convex function $g$, with parameter $\epsilon$, is given by the inf convolution $g_\epsilon(x)=\inf_{y\in H} \frac{\norm{x-y}_H^2}{2\epsilon}+g(y)$. It has the advantages of being finite, and even differentiable, for any $x\in H$, upon some mild assumptions on $g$. Notice also that $g_{\epsilon}(x)$ has limit $g(x)$ as $\epsilon$ goes to 0, whereas the limit is $\inf g$ as $\epsilon$ goes to $+\infty$. As we are on the space $\Prob(\Omega)$, a natural replacement for the squared norm is the 2-Wasserstein distance squared defined by \eqref{def:Wasserstein} in the Preliminaries section and for $\epsilon>0$, we set
$$F_\epsilon:\mu\in\Prob(\R^d)\mapsto\min\limits_{\rho\in \Meas(\Omega)}\frac{W_2^2(\mu,\rho)}{2\epsilon} + F(\rho).$$
We call $F_\epsilon$ the Moreau envelop of $F$ with parameter $\epsilon$ (by analogy with the Hilbert case). Note that  $F(\rho)<+\infty$ implies $spt(\rho)\subset\Omega$, and therefore, as $\epsilon$ goes to $0$, points outside $\Omega$ highly penalize the value of $F_\epsilon$.

The corresponding regularization of our energy $J$, which we describe now, is inspired by \cite{merigot2016minimal} where a similar treatment is applied to a variational formulation for the incompressible Euler equations. The other terms in $J$ are well defined for discrete probabilities as well, and therefore, we will keep them unchanged in the discretized problem:
\begin{align*}
	\hypertarget{pb:discr_x}{}
	&\MD : \inf \left\{ \JD(Q) \mid Q\in\Prob_N(\Gamma),~e_0\#Q=\mu^0_N\right\} \\
	&\hbox{ where } \JD(Q):=\int_{\Gamma}L(\gamma')dQ(\gamma) + \int_0^T F_{\epsilon}(e_t\#Q)dt + G(Q) 
\end{align*}

We will come back to $F_\epsilon$ more extensively in Section \ref{part:4}. For now, we will only use its lower semi-continuity on  $\Prob(\R^d)$ and the limits as $\epsilon\to0/+\infty$, mentioned in Proposition \ref{prop:lsc_cvg}. Immediately, by similar arguments as in Section \ref{part:1}, we have existence of minimizers for this discrete problem:

\begin{prop}
	For every $N\in\N^*$, $\epsilon>0$, $\JD$ is l.s.c for the narrow convergence  and for every $\mu^0_N\in\Prob_N(\Omega)$, the infimum in $\MD$ is attained.
\end{prop}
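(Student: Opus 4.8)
The plan is to reproduce the direct method used in Proposition \ref{prop:exist_cont}, the only genuinely new point being that the constraint set $\Prob_N(\Gamma)$ now consists of finitely-supported measures and must be shown to be preserved under the narrow limits of minimizing sequences.

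For the lower semicontinuity of $\JD$, I would treat the three terms exactly as before. The kinetic term $Q\mapsto\int_\Gamma L\,dQ$ is l.s.c.\ by Theorem 4.5 of \cite{giusti2003direct} together with the growth bounds \eqref{Lagran}, and $G$ is l.s.c.\ by assumption. For the congestion term $Q\mapsto\int_0^1 F_\epsilon(e_t\#Q)\,dt$ the argument is formally identical to the one carried out for $F$: if $Q_n\to Q_\infty$ narrowly, then $e_t\#Q_n\to e_t\#Q_\infty$ narrowly for every $t$ by continuity of $e_t$, so the l.s.c.\ of $F_\epsilon$ on $\Prob(\R^d)$ (from Proposition \ref{prop:lsc_cvg}) combined with Fatou's Lemma yields $\int_0^1 F_\epsilon(e_t\#Q_\infty)\,dt\leq\liminf_n\int_0^1 F_\epsilon(e_t\#Q_n)\,dt$. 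The only thing to verify here is that Proposition \ref{prop:lsc_cvg} may indeed be invoked; it also provides the finiteness of $F_\epsilon$, which guarantees that the infimum in $\MD$ is finite, for instance by testing against the $N$ constant trajectories emanating from the atoms of $\mu^0_N$, so that a minimizing sequence is meaningful.

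For existence, I take a minimizing sequence $(Q_n)\subset\Prob_N(\Gamma)$ with $e_0\#Q_n=\mu^0_N$. Since $F_\epsilon$ and $G$ are bounded below and $\JD(Q_n)$ is bounded, the lower bound in \eqref{Lagran} forces $\int_\Gamma L\,dQ_n\leq M$ uniformly in $n$. Writing $Q_n=\frac1N\sum_{i=1}^N\delta_{\gamma_i^n}$, this reads $\frac1N\sum_i L(\gamma_i^n)\leq M$, and since each summand is bounded below, each individual energy $L(\gamma_i^n)$ is bounded uniformly in $n$. This is the crux: whereas in the continuous case tightness only produced a narrow limit in the large space $\Prob(\Gamma)$, here the uniform energy bound confines \emph{every} atom $\gamma_i^n$ to a single compact set $K_{C'}=\{\gamma\mid L(\gamma)\leq C',\ \gamma(0)\in\Omega\}$, compact by Arzelà--Ascoli and the Hölder continuity coming from \eqref{Lagran}, exactly as in Proposition \ref{prop:exist_cont}.

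I would then extract, coordinate by coordinate, a subsequence along which $\gamma_i^n\to\gamma_i^\infty\in K_{C'}$ in $\Gamma$ for each $i$, and set $Q_\infty=\frac1N\sum_{i=1}^N\delta_{\gamma_i^\infty}\in\Prob_N(\Gamma)$. Narrow convergence $Q_n\to Q_\infty$ is then immediate, the constraint $e_0\#Q_\infty=\mu^0_N$ passes to the limit by continuity of $e_0$, and the lower semicontinuity established above gives $\JD(Q_\infty)\leq\liminf_n\JD(Q_n)=\inf\MD$, so that $Q_\infty$ is a minimizer. The main obstacle is thus the closedness of $\Prob_N(\Gamma)$ under narrow convergence, which I resolve not through an abstract closedness statement but via the observation that finite kinetic energy traps all $N$ atoms in one common compact set, reducing the extraction to a finite product of sequences valued in a compact space.
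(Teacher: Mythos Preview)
Your proof is correct and follows the same direct-method strategy the paper intends; the paper itself gives no explicit proof here, merely stating that the argument is ``by similar arguments as in Section~\ref{part:1}''. The one point the paper leaves implicit and you handle carefully is why the narrow limit of a minimizing sequence remains in $\Prob_N(\Gamma)$: rather than invoking an abstract closedness of $\Prob_N(\Gamma)$, you exploit the uniform kinetic bound to trap all $N$ atoms in a common compact $K_{C'}$ and extract coordinate-wise, which is a clean and fully elementary way to secure this.
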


Similarly to the Hilbert case, one would expect minimizers for $\MD$ to converge to a minimizer of $\M$ as $N\to\infty$ and $\epsilon\to 0$. This is the case, but only provided $\epsilon$ does not vanish too quickly. This kind of convergence is very much in the spirit of $\Gamma$-convergence and is stated in Proposition \ref{prop:gammad} below. Note however that the result stated in \textbf{\emph{(Upper bound)}} is weaker than the usual $\Gamma$-limsup one.

The proof of this proposition uses a quantization argument on a solution $\Qmin$ of $\M$. From standard Sobolev inclusions, we can find $\frac{1}{2}<s\leq1$ such that $$\Wrt\xhookrightarrow{} \Hst\xhookrightarrow{}\C([0;T],\R^d)$$ These injections are compact (recall that r is the exponent in the definition of the Lagrangian $L$). From now on, we will denote $\Hst$ by $\Hs$ and the 2-Wasserstein "distance" (the problem could have value $+\infty$ in that case) on $\Prob(\Hs)$ by $W_{\mathbb{H}^s}$. In particular, $\Qmin$ is supported on $\Hs$ and we will take our quantization measures supported in this same space.

The reason behind this choice is the following: to prove the (Upper bound) part of Proposition \ref{prop:gammad} below, we need to approximate $\Qmin$ using discrete probabilities which have lower kinetic energy. Although the approximation can be done by quantization measures according to most Wasserstein distances, taking one associated with a Hilbert norm on a $\Hs$ Sobolev space, gives quantization measures supported on suitable barycenters, in some sense. $L$ being a convex function, this gives us measures with a lower kinetic energy than $\Qmin$, which will be useful in the proof of the (Upper bound) claim below.

\begin{prop}
	\label{prop:gammad}
	Let $(\epsilon_N)_N$ be a positive sequence vanishing at infinity and assume that $\mu^0_N$ narrowly converges towards $\mu^0$ in $\Prob(\R^d)$.
	\begin{itemize}
		\item \textbf{(Lower bound)} Let $(Q_N)_{N}$ narrowly converge to $Q_\infty$ in $\Prob(\Gamma)$. Then, we have $$J(Q_\infty)\leq\liminf\limits_{N\to\infty}\JDN(Q_N).$$
	\end{itemize}
	For $N\in\N$, let 
	\begin{equation}
		\label{def:tau_N}
		\tau_N:=\inf\left\{W_{\mathbb{H}^s}^2(\tilde{Q}_N,Q_{min})~\middle|~\tilde{Q}_N\in\Prob_N(\Hs)\right\}
	\end{equation}
	be the optimal N-point quantization error for $\Qmin$ in $\Prob(\Hs)$.
	\begin{itemize}
		\item \textbf{(Upper bound)}  Assume that $\tau_N=o_{N\to\infty}(\epsilon_N)$ and $W_2^2(\mu_N^0,\mu^0)=o_{N\to\infty}(\epsilon_N)$. Then for any sequence $(Q_N)_N$ where $Q_N$ is a minimizer, respectively for $\MDN$,
		
		$$\limsup_{N\to\infty}\JDN(Q_N)\leq J(\Qmin).$$
	\end{itemize}
\end{prop}

\begin{lem}
	\label{lemma:quant}
	For every $N\in\N^*$, there exists $\tilde{Q}_N\in\Prob(\Hs)$ such that  $\tau_N:=W_{\mathbb{H}^s}^2(\tilde{Q}_N,Q_{min})$ and,
	$$\int_\Gamma L(\gamma')d\tilde{Q}_N(\gamma)\leq \int_\Gamma L(\gamma')d\Qmin(\gamma).$$
	Furthermore, $\tau_N\to0$ as $N$ goes to infinity and in particular, $\tilde{Q}_N$ narrowly converges towards $Q_{min}$ in $\Prob(\Gamma)$, as $N\to\infty$.
\end{lem}

\begin{proof}[Proof of Lemma \ref{lemma:quant}:]
	$W_{\mathbb{H}^s}^2(.,\Qmin)$ is l.s.c for the narrow convergence on $\Prob_N(\Gamma)$, from the lower semi-continuity of the $\mathbb{H}^s$ norm with respect to the uniform norm on $\Gamma$. 
	Take a minimizing sequence $(Q_n)_n$ for our problem. We can choose $Q_n$ to have lower kinetic energy than $\Qmin$:
	
	To see this, fix $n\in\N$, and set $Q_n=\bary{\tilde{\gamma}^i}$ and $P=\frac{1}{N}\sum_{i=1}^N \delta_{\tilde{\gamma}^i}\times \Qmin^i$ an optimal transport plan from $Q_n$ to $\Qmin$ (in particular, $\Qmin^i\in\Prob(\Gamma)$ for every $i$).
	We construct a competitor to $Q_n$ for the infimum problem \eqref{def:tau_N}, supported on the barycenters of the measures $\Qmin^i$ (which play the roles of the Laguerre cells from semi-discrete optimal transport, defined at \eqref{def:Laguerre}). For $i=1\dots N$, set $\eta^i=\int_\Gamma \gamma d\Qmin^i(\gamma)$. Each $\eta^i$ is a minimizer of the convex functional $\int_\Gamma ||.-\gamma||_{\Hs}^2d\Qmin^i(\gamma)$ over $\Hs$. Indeed, this functional is differentiable on $\Hs$, with gradient $2\int_\Gamma(.-\gamma)d\Qmin^i(\gamma)$ which vanishes at $\eta_i$. Therefore, 
	\begin{equation*}
		\begin{split}
			W_{\mathbb{H}^s}^2\left(\bary{\eta^i},\Qmin\right) & \leq \frac{1}{N}\sum\limits_{i=1}^N\int_\Gamma ||\eta^i-\gamma||_{\mathbb{H}^s}^2d\Qmin^i(\gamma)\\
			& \leq \frac{1}{N}\sum\limits_{i=1}^N\int_\Gamma ||\tilde{\gamma}^i-\gamma||_{\mathbb{H}^s}^2d\Qmin^i(\gamma)=W_{\mathbb{H}^s}^2(Q_n,\Qmin)
		\end{split}
	\end{equation*}
	and we can assume that $Q_n$ is supported on the barycenters $\eta_i$. But, then, by convexity of L, $Q_n$ has lower kinetic energy than $\Qmin$: $\int_\Gamma L(\gamma')dQ_n(\gamma)\leq \int_\Gamma L(\gamma')d\Qmin(\gamma)$. Similarly to Proposition \ref{prop:exist_cont}, we can conclude that $(Q_n)_n$ is tight and, up to a subsequence, it narrowly converges towards a minimizer $\tilde{Q}_N$ of $W_{\Hs}^2(.,\Qmin)$ over $\mathcal{P}_N(\Gamma)$, which verifies 
	$$\int_\Gamma L(\gamma')d\tilde{Q}_N(\gamma)\leq \int_\Gamma L(\gamma')d\Qmin(\gamma)$$	
	
	To show that $\tau_N$ vanishes at infinity, it is sufficient to show that there exists $(Q_N)_N$, such that, for every $N$,  $Q_N\in\Prob_N(\Hs)$, $Q_N$ narrowly converges towards $\Qmin$ in $\Prob_N(\Hs)$, and $\int_\Gamma ||\gamma||_{\Hs}^2dQ_N(\gamma)$ converges towards $\int_\Gamma ||\gamma||_{\Hs}^2d\Qmin(\gamma)$, as N goes to infinity. This can be done, for instance, as in Theorem 2.13 of \cite{bobkov2019one}, by sampling trajectories in the support of $\Qmin$ and using a law of large numbers. Finally, since $s>\frac{1}{2}$, $\mathbb{H}^s$ is continuously injected in $\Gamma$, and we have the narrow convergence in $\mathcal{P}(\Gamma)$ (for the uniform norm, this time).
\end{proof}

\begin{proof}[Proof of Proposition \ref{prop:gammad} (Lower bound)]
	Take $Q_N$ and $Q_\infty$ as in the proposition. For every $t\in[0;T]$ and every $N$, define $\rho_N^t$ as a minimizer in the problem defining $F_{\epsilon_N}(e_t\#Q_N)$.
	
	One can assume that $\JDN(Q_N)$ is bounded from above. Therefore, there exists $C>0$ such that $\int_0^T W_2^2(e_t\#Q_N,\rho_N^t)dt\leq C\epsilon_N$ for every $N$, since $F$ and $G$ are also bounded from below. Up to extracting a subsequence, we can assume that for almost all $t\in[0;T]$, $\rho_N^t$ narrowly converges, as N goes to infinity, towards $e_t\#Q_\infty$. Using Fatou lemma, we get $$\int_0^T F(e_t\#Q_\infty)dt\leq \int_0^T \liminf\limits_{N\to\infty} F_{\epsilon_N}(e_t\#Q_N)dt\leq\liminf\limits_{N\to\infty}\int_0^T F_{\epsilon_N}(e_t\#Q_N)dt$$
	
	The lagrangian part (as well as  $G$ of course) is l.s.c for the narrow convergence, and we can write:
	\begin{equation*}
		\label{ineq:I}
		\begin{split}
			\int_\Gamma L(\gamma')dQ_\infty(\gamma)+\int_0^T F(e_t\#Q_\infty)dt + G(Q_\infty) 
			\leq & \liminf\limits_{N\to\infty}\int_\Gamma L(\gamma')dQ_N(\gamma)\\
			& +\liminf\limits_{N\to\infty}\int_0^T F_{\epsilon_N}(e_t\#Q_N)dt + \liminf\limits_{N\to\infty}G(Q_N) \\
			\leq & \liminf\limits_{N\to\infty} \JDN(Q_N)
		\end{split}
	\end{equation*}
	which is our claim.
\end{proof}

	
	\begin{proof}[Proof of Proposition \ref{prop:gammad} (Upper bound):]
		Set $N\in\N^*$ and $\tilde{Q}_N$ and $\tau_N$ as in Lemma \ref{lemma:quant}. As is, $\tilde{Q}_N$ is not necessarily admissible since it may not satisfy $e_0\#\tilde{Q}_N=\mu^0_N$. However, since they are discrete measures with the same amount of Diracs and the same masses, we can simply translate the curves in $spt(\tilde{Q}_N)$ in order for it to be admissible for $\MDN$, using vectors that are constant in time. This new measure, which we denote by $\tilde{Q}_{\mu^0_N,N}$ is admissible for $\MDN$, has the same kinetic energy as $\tilde{Q}_N$ and satisfies  $W_{\Hs}^2(\tilde{Q}_{\mu^0_N,N},\tilde{Q}_N)=W_2^2(\mu^0_N,e_0\#\tilde{Q}_N)$.
		\\
		
		Now, if $Q_N$ is a minimizer for $\MDN$, then
		\begin{equation}
			\label{eq:gammasupd}
			\begin{split}
				\JDN(Q_N)\leq \JDN(\tilde{Q}_{\mu^0_N,N}) \leq & \int_\Gamma L(\gamma')d\tilde{Q}_N(\gamma)+ G(\tilde{Q}_{\mu^0_N,N})\\
				&~~~+\int_0^T \frac{W_2^2(e_t\#\tilde{Q}_{\mu^0_N,N},e_t\#\Qmin)}{2\epsilon_N}+F(e_t\#\Qmin)dt\\
				\leq& \int_\Gamma L(\gamma')d\Qmin(\gamma)+ G(\tilde{Q}_{\mu^0_N,N})+T\frac{W_2^2(e_0\#\tilde{Q}_N,\mu^0_N)}{\epsilon_N}\\
				&+\int_0^T \frac{W_2^2(e_t\#\tilde{Q}_N,e_t\#\Qmin)}{\epsilon_N}+F(e_t\#\Qmin)dt 
			\end{split}
		\end{equation}
		Now, since $s>\frac{1}{2}$, by Sobolev injections, there exists a constant $C>0$, such that $||.||_\infty\leq C||.||_{\mathbb{H}^s}$ on $\Hs$ and this implies (with a slightly different constant)
		$$\int_0^T W_2^2\left(e_t\#\tilde{Q}_N,e_t\#\Qmin\right)dt\lesssim W_{\Hs}^2\left(\tilde{Q}_N,\Qmin\right)=\tau_N.$$
		But then, by convexity of the transport cost, $$W_2^2(e_0\#\tilde{Q}_N,\mu^0_N)\leq 2\left(W_2^2(e_0\#\tilde{Q}_N,\mu^0)+W_2^2(\mu^0,\mu^0_N)\right)\lesssim\left( \tau_N +W_2^2(\mu^0,\mu^0_N)\right),$$ and $\tilde{Q}_{\mu^0_N,N}$ narrowly converges to $\Qmin$ in $\Prob(\Gamma)$. If we take $(\epsilon_N)_N$ and $\mu^0_N$ such that $\tau_N$ and $W_2^2(\mu^0,\mu^0_N)$ are negligible compared to $\epsilon_N$ as $N\to\infty$, then, taking the limsup in the inequalities of \eqref{eq:gammasupd}, we get $\limsup_N \JDN(Q_N)\leq J(\Qmin)$, as we wanted.
	\end{proof}
	
	\begin{cor}
		\label{cor:cvmin}
		With the same notations and assumptions on $(\epsilon_N)_N$ and $(\mu^0_N)_N$ as in proposition \ref{prop:gammad} \textbf{(Upper bound)}, $Q_N$ narrowly converges, up to a subsequence, towards a minimizer of $J$. In particular, if $\M$ has a unique minimizer $\Qmin$, then any such sequence $(Q_N)_N$ narrowly converges toward $\Qmin$.
	\end{cor}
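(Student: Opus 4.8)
The plan is to run the standard argument that deduces convergence of minimizers from a $\Gamma$-convergence-type statement, combining the (Lower bound) and (Upper bound) of Proposition \ref{prop:gammad} with a compactness argument analogous to the one in Proposition \ref{prop:exist_cont}. First I would establish relative compactness of the sequence $(Q_N)_N$ of minimizers. By the (Upper bound), $\limsup_{N\to\infty}\JDN(Q_N)\leq J(\Qmin)<+\infty$, so the energies $\JDN(Q_N)$ are bounded by some $M>0$. Since $F_{\epsilon_N}(\mu)\geq\inf F$ and both $F$ and $G$ are bounded from below, this yields a uniform bound $\int_\Gamma L(\gamma)\,dQ_N(\gamma)\leq C'$ on the kinetic energies. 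Exactly as in Proposition \ref{prop:exist_cont}, the sublevel sets $K_C=\{\gamma\mid L(\gamma)\leq C,~\gamma(0)\in\Omega\}$ are compact and $Q_N(\Gamma\setminus K_C)\leq C'/C$, so $(Q_N)_N$ is tight; by Prokhorov's theorem I extract a subsequence (still denoted $Q_N$) narrowly converging to some $Q_\infty\in\Prob(\Gamma)$.

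Next I would check that the limit is admissible for $\M$. Since $e_0$ is continuous on $\Gamma$, $e_0\#Q_N$ narrowly converges to $e_0\#Q_\infty$; but $e_0\#Q_N=\mu^0_N$ narrowly converges to $\mu^0$ by assumption, whence $e_0\#Q_\infty=\mu^0$ and $Q_\infty$ is admissible. I then combine the two bounds: the (Lower bound) applied along the extracted subsequence gives $J(Q_\infty)\leq\liminf_{N\to\infty}\JDN(Q_N)$, while the (Upper bound) gives $\limsup_{N\to\infty}\JDN(Q_N)\leq J(\Qmin)$. Chaining these,
$$J(Q_\infty)\leq\liminf_{N\to\infty}\JDN(Q_N)\leq\limsup_{N\to\infty}\JDN(Q_N)\leq J(\Qmin).$$
Because $Q_\infty$ is admissible and $\Qmin$ minimizes $J$, the reverse inequality $J(Q_\infty)\geq J(\Qmin)$ also holds, so all inequalities are equalities and $Q_\infty$ is a minimizer of $J$, proving the first (subsequential) assertion.

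For the uniqueness case, the argument above shows that every narrowly convergent subsequence of $(Q_N)_N$ converges to a minimizer of $J$, which must be $\Qmin$ when the minimizer is unique. Since $(Q_N)_N$ is tight, hence relatively compact, the usual subsequence-of-subsequence argument applies: any subsequence admits a further subsequence converging to $\Qmin$, which forces the whole sequence $(Q_N)_N$ to converge narrowly to $\Qmin$.

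The step I expect to be the main obstacle is the compactness part, namely extracting the uniform bound on the kinetic energies of the minimizers from the (Upper bound) and feeding it into the tightness estimate of Proposition \ref{prop:exist_cont}; once tightness is secured, the remainder is the routine $\liminf\leq\limsup$ sandwich characteristic of $\Gamma$-convergence and the standard reduction of full convergence to uniqueness of the limit.
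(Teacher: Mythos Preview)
Your proposal is correct and follows essentially the same route as the paper: bound the energies via the (Upper bound), deduce a uniform kinetic-energy bound and hence tightness as in Proposition~\ref{prop:exist_cont}, extract a narrow limit, check its admissibility, and sandwich with the (Lower bound) to conclude minimality. Your treatment is in fact more explicit than the paper's (which leaves the uniqueness/full-convergence step implicit), and the subsequence-of-subsequence argument you give for the unique-minimizer case is exactly the right way to finish.
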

	
	\begin{proof}
		Similarly to $\Gamma$-convergence, this is a direct consequence of Propostion \ref{prop:gammad}. By the \emph{\textbf{(Upper bound)}} property, up to a subsequence, $\JDN(Q_N)$ converges towards a limit $l\leq \min J$. Then as before, $\int_{\Gamma}LdQ_N$ is bounded in $N$ and $(\mu^0_N)_N$ is tight in $\Prob(\R^d)$, therefore, $(Q_N)_N$ is tight, in $\Prob(\Gamma)$. Let us extract from it a subsequence converging towards $Q_\infty\in\mathcal{P}(\Gamma)$. Then $e_0\#Q_\infty=\mu^0$, and by the \emph{\textbf{(Lower bound)}} property, $J(Q_\infty)\leq l \leq \min J$ hence, $Q_\infty$ is a minimizer of $\M$.
	\end{proof}
	
	The proper sequence $\epsilon_N$ of parameters (or rather their precise behaviour as $N\to\infty$), remains beyond our reach even in the simpler convex situation presented in Section \ref{part:6}. However, a hint on how to bound the sequence $\tau_N$ is given by the following correspondence between optimal quantization and optimal covering of a set. We refer the reader to \cite{merigot2016minimal} proposition 4.2 for a demonstration, as well as \cite{gersho2012vector} for more details on the subject of vector quantization:
	
	\begin{prop}
		\label{prop:quant_bounds}
		For a metric space $(X,d_X)$, take $Q\in\mathcal{P}(X)$ supported on $\Sigma\subset X$. Define the optimal quantization error of $Q$, $\tau_N=\min\left\{W_{d_X,2}^2(Q, \tilde{Q})~\middle|~\tilde{Q}\in\Prob_N(X)\right\}$ as in Proposition \ref{prop:gammad}, and the optimal covering radius of $\Sigma$ by $r_N=\inf \left\{d_{H}(\Sigma,P)~\middle|~P\subset X,~\card(P)\leq N\right\}$ (here, $d_H$ is the Hausdorf distance between subsets of $X$).
		
		Then, assuming $r_N=O_{N\to\infty}\left(N^{-\frac{1}{D}}\right)$ one has: $\label{assympt_tau}\tau_N=\begin{cases}O_{N\to\infty}\left(N^{-1}\right) & \text{if $D<2$}\\O_{N\to\infty}\left(N^{-1}\ln N\right) & \text{if $D=2$}\\O_{N\to\infty}\left(N^{-\frac{2}{D}}\right) & \text{if $D>2$}\end{cases}$
	\end{prop}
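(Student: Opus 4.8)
The plan is to recognize this as a deterministic, covering-based analogue of the Ajtai--Koml\'os--Tusn\'ady matching theorem, whose exponents $N^{-1}$, $N^{-1}\ln N$, $N^{-2/D}$ are precisely the ones appearing here, and to prove it by a multiscale (dyadic chaining) transport argument. First I would translate the covering hypothesis $r_N=O(N^{-1/D})$ into the equivalent statement that $\Sigma$ can be covered by $\lesssim 2^{kD}$ balls of radius $2^{-k}$, and use this to build a nested family of partitions $(\mathcal{A}_k)_{0\le k\le K}$ of $\Sigma$, where $\mathcal{A}_k$ consists of $\lesssim 2^{kD}$ cells of diameter $\lesssim 2^{-k}$, each cell of $\mathcal{A}_k$ being subdivided into boundedly many cells of $\mathcal{A}_{k+1}$. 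The finest scale is $K$ with $2^{KD}\sim N$, i.e. $2^{-K}\sim N^{-1/D}$.

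The core estimate is the chaining transport bound: for any probability measure $\nu$ supported on $\Sigma$,
\[ W_2^2(Q,\nu)\ \lesssim\ \sum_{k=0}^{K} 4^{-k}\, D_k + 4^{-K}, \qquad D_k:=\sum_{c\in\mathcal{A}_k}\bigl|Q(c)-\nu(c)\bigr|, \]
where the additive $4^{-K}\sim N^{-2/D}$ accounts for transport within a finest cell. I would prove it by constructing a single transport plan that, reading the scales from coarse to fine, only ever relocates the mass that is mismatched between $Q$ and $\nu$ at a given scale: a unit of mass that must be moved at scale $k$ travels at most a cell diameter $\lesssim 2^{-k}$, contributing $\lesssim 4^{-k}$ to the squared cost, and the total such mass at scale $k$ is controlled by $D_k$. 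A one-line check on a single transfer of mass $\epsilon$ over distance $2^{-k_0}$ (which makes $|Q(c)-\nu(c)|$ jump by $\sim\epsilon$ at every scale $k\ge k_0$) confirms that $4^{-k}$ is the correct geometric weight.

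It then remains to choose the uniform quantizer $\tilde{Q}=\tfrac1N\sum_i\delta_{x_i}$ so that $D_k\lesssim 2^{kD}/N$ for every $k$. I would place the $N$ points by a hierarchical rounding of the cell masses: descending the partition tree, I allocate to each cell $c$ an integer number of points $n_c$ with $\sum_c n_c=N$ and $n_c\approx N\,Q(c)$, so that $\tilde{Q}(c)=n_c/N$ differs from $Q(c)$ by $O(1/N)$. As $\mathcal{A}_k$ has $\lesssim 2^{kD}$ cells this yields $D_k\lesssim 2^{kD}/N$, and substitution into the chaining bound gives
\[ \tau_N\ \lesssim\ \frac1N\sum_{k=0}^{K} 2^{k(D-2)}\ +\ N^{-2/D}, \]
whose behaviour splits exactly at $D=2$: the series converges, giving $O(N^{-1})$, for $D<2$; it has $K+1\sim\ln N$ equal terms, giving $O(N^{-1}\ln N)$, for $D=2$; and it is dominated by its last term $2^{K(D-2)}=N^{1-2/D}$, giving $O(N^{-2/D})$, for $D>2$. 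In each case the stray $N^{-2/D}$ is absorbed into the stated rate, reproducing the three cases.

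The main obstacle is the sharpness of the rounding step. A naive top-down allocation only guarantees $|Q(c)-\tilde{Q}(c)|\lesssim k/N$ for a scale-$k$ cell, since rounding errors may accumulate down the tree; this would cost an extra logarithm and produce $O(N^{-1}(\ln N)^2)$ rather than $O(N^{-1}\ln N)$ at the critical dimension $D=2$. Obtaining the sharp single logarithm requires an allocation whose per-cell discrepancy remains $O(1/N)$ uniformly across \emph{all} scales at once (a bounded-carry, telescoping rounding), together with some care for cells carrying atomic or boundary mass of $Q$. A secondary point, since $X$ is merely a metric space, is to justify the nested partitions and the chaining plan from the covering numbers alone, with no grid or linear structure to rely on; this is routine but must be carried out directly from the Hausdorff-distance definition of $r_N$.
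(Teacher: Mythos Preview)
The paper does not supply a proof of this proposition: it is stated, followed by a sentence identifying $D$ with the box (Minkowski) dimension of $\Sigma$ and a remark on how to use the bound, but no argument. So there is no paper proof to compare against.

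On its own merits your proposal is the right strategy and the arithmetic at the end is correct. Two comments on the obstacles you already flag.

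\textbf{The rounding step.} Your worry that top-down allocation accumulates an error of order $k/N$ at depth $k$ is well founded, and the fix you gesture at can be made concrete as follows. Order the finest cells $c_1,\dots,c_M$ of $\mathcal{A}_K$ by a depth-first traversal of the partition tree, and set $n_{c_j}=\lfloor N\sum_{i\le j}Q(c_i)\rfloor-\lfloor N\sum_{i<j}Q(c_i)\rfloor$. Then $\sum_j n_{c_j}=N$, and because every cell $c\in\mathcal{A}_k$ corresponds to a contiguous block of finest cells in this ordering, one gets $\bigl|NQ(c)-\sum_{c_j\subset c}n_{c_j}\bigr|\le 2$ uniformly over all scales, which is exactly the bounded-carry rounding you need. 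This yields $D_k\lesssim 2^{kD}/N$ with no logarithmic loss.

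\textbf{The chaining bound.} Be careful not to derive it via the triangle inequality for $W_2$ followed by Cauchy--Schwarz, which would insert an extra factor $K\sim\ln N$ and spoil the cases $D\le 2$. Your stated plan of building a \emph{single} transport plan is the right one: the clean bookkeeping is to note that each unit of mass, once it is first displaced at a coarsest scale $k_0$, subsequently moves distances $\lesssim 2^{-k_0}+2^{-k_0-1}+\dots\lesssim 2^{-k_0}$, hence contributes $\lesssim 4^{-k_0}$ to the squared cost; summing over mass with a given $k_0$ and bounding that mass by $D_{k_0}$ gives $\sum_k 4^{-k}D_k$ without the spurious $K$. The existence of nested partitions with bounded branching in a general metric space, from the covering-number hypothesis alone, is routine but does need to be written out (greedily refine a near-optimal $2^{-k}$-net inside each parent cell).
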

	
	The constant $D$ in this proposition will often be the  so-called box-dimension or Minkowsky dimension of the set $\Sigma$. 
	
	\begin{rmk}
		An initial point to make is that proposition \ref{prop:quant_bounds} with $X=\R^d$ guarantees that we can choose $\mu^0_N$ in such a way that $W_2^2(\mu^0_N,\mu^0)=O_{N\to\infty}(N^{-2/d})$. This is an information to take into account when choosing $\epsilon_N$ (although it is likely to be redundant with the one given by the growth of $\tau_N$ defined in Proposition \ref{prop:gammad}).
		
		To bound the covering radius $r_N$ of the support of $\Qmin$, one can recall that this measure gives us a solution $(\rho, v)$ to the continuity equation,  according to Theorem \ref{prop:AGS_rep}. In return, $\Qmin$-almost every curve is solution almost everywhere on $[0;T]$ of the differential equation  $\gamma'=v(\gamma)$. Provided we have some uniform Lipschitz-continuity of $v$, $\rho$ is then given by the pushforward of $\mu^0$ along the general solution of $x'=v(t,x)$ (see for instance \cite{ambrosio2008gradient}, chapter 8.1). It is then immediate that $\mathrm{spt}(\Qmin)$ is of box-dimension $D=d$ (the dimension of $\Omega$) in $\Hs$ and, in that case, we can take any $\epsilon_N$ dominating $N^{-2/d}$ (or $\ln(N)/N$ in dimension $d=2$). 
	\end{rmk}
	
	Let us finish this section by mentioning a stronger convergence result, in the cases where $F$ has the integral form \eqref{def:int_form} for more specific functions $f$ on $\R$. We recall the notation for the Moreau-Yosida projections of $Q_N$ at various times, introduced in the proof of Proposition \ref{prop:gammad}, \emph{\textbf{(Lower bound)}}: For $t\in[0;T]$, $$\rho_N(t,.):=\argmin_\rho \frac{W_2^2(\rho,e_t\#Q_N)}{2\epsilon_N}+\int_\Omega f(\rho(x))dx$$ and we assume here that $(Q_N)_N$ is a sequence of minimizers, for problem $\MDN$ respectively. In every case considered in Proposition \ref{prop:cv_j_j*} below, $\M$ has a unique solution $\Qmin$ and we denote by $\rho_{\min}:(t,x)\in[0;T]\times\Omega\mapsto e_t\#\Qmin(x)$ the corresponding absolutely continuous measure on $[0;T]\times\Omega$. We take the appropriate values for the parameters $\epsilon_N$ and $\mu^0_N$ such that (up to a subsequence), $Q_N$ narrowly converges to $\Qmin$ in $\Prob(\Gamma)$ and $\JDN(Q_N)$ converges to $J(\Qmin)$ (see Proposition \ref{prop:gammad}).
	\begin{lem}
		\label{lem:conv_cong}
		With these notations, $$\lim_{N\to\infty}\int_0^T\int_\Omega f(\rho_N(t)(x))dx=\int_0^T\int_\Omega f(e_t\#\Qmin (x))dx$$
	\end{lem}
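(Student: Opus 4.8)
The plan is to deduce the convergence of the congestion integral from the already established convergence of the full energies $\JDN(Q_N)\to J(\Qmin)$, by isolating the congestion term and squeezing it between a matching $\limsup$ and $\liminf$. Throughout I write
$$F_{\epsilon_N}(e_t\#Q_N)=\frac{W_2^2(\rho_N(t),e_t\#Q_N)}{2\epsilon_N}+F(\rho_N(t)),$$
where $\rho_N(t)$ is by definition the (unique) Moreau projection, and I recall that $F(\rho)=\int_\Omega f(\rho(x))dx$ is bounded from below on $\Prob(\Omega)$: by convexity, any subgradient $p\in\partial f(a)$ at an interior point $a$ of $\mathrm{Dom}(f)$ yields $\int_\Omega f(\rho)\,dx\ge f(a)|\Omega|+p(1-a|\Omega|)$, a finite constant (this is the same lower bound already used inside the proof of Proposition~\ref{prop:gammad}).

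First I would show that the congestion \emph{energy} converges, namely $\int_0^T F_{\epsilon_N}(e_t\#Q_N)\,dt\to \int_0^T F(e_t\#\Qmin)\,dt=:\bar y$. Set $a_N=\int_\Gamma L\,dQ_N$, $b_N=\int_0^T F_{\epsilon_N}(e_t\#Q_N)\,dt$, $g_N=G(Q_N)$ and $\bar a=\int_\Gamma L\,d\Qmin$. Since $G$ is continuous for the narrow topology, $g_N\to G(\Qmin)$, hence $a_N+b_N\to \bar a+\bar y$. On the other hand $\liminf_N a_N\ge \bar a$ (lower semicontinuity of the kinetic energy) and $\liminf_N b_N\ge \bar y$ (exactly the estimate obtained in the lower bound part of Proposition~\ref{prop:gammad} with $Q_\infty=\Qmin$). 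A standard squeezing then applies: $\limsup_N a_N\le(\bar a+\bar y)-\liminf_N b_N\le\bar a$, so $a_N\to\bar a$, and consequently $b_N=(a_N+b_N)-a_N\to\bar y$.

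Next I would pass from the Moreau envelope to the projected density. As $b_N$ converges it is bounded, and since $F$ and $G$ are bounded from below this forces $\int_0^T \frac{W_2^2(\rho_N(t),e_t\#Q_N)}{2\epsilon_N}\,dt\le C$, whence $\int_0^T W_2^2(\rho_N(t),e_t\#Q_N)\,dt\lesssim \epsilon_N\to0$. Because $e_t\#Q_N\to e_t\#\Qmin$ narrowly for every $t$ (continuity of $e_t$), up to a subsequence $\rho_N(t)\to e_t\#\Qmin$ narrowly for almost every $t$. The claim then follows from two one-sided bounds: since the Wasserstein term is nonnegative, $\int_0^T F(\rho_N(t))\,dt\le b_N\to\bar y$ gives $\limsup_N \int_0^T F(\rho_N(t))\,dt\le \bar y$; and lower semicontinuity of $F$ together with Fatou's lemma (the integrand is bounded below) gives $\liminf_N\int_0^T F(\rho_N(t))\,dt\ge \int_0^T F(e_t\#\Qmin)\,dt=\bar y$. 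Recalling $F(\rho_N(t))=\int_\Omega f(\rho_N(t)(x))\,dx$, this is precisely the statement.

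The step I expect to require the most care is reconciling these two bounds, since the a.e.-in-$t$ narrow convergence $\rho_N(t)\to e_t\#\Qmin$ is only available \emph{along a subsequence}, whereas the target is a genuine limit of the full sequence. I would resolve this with the subsequence-of-subsequence principle: the bound $\limsup\le\bar y$ holds for the entire sequence, while for the matching $\liminf\ge\bar y$ one notes that every subsequence admits a further subsequence along which the Wasserstein term tends to $0$ almost everywhere and Fatou applies, so no subsequential limit can lie strictly below $\bar y$. One should also check that the objects are well defined, i.e. that $e_t\#\Qmin\ll dx$ for a.e.\ $t$, which is guaranteed by $\int_0^T F(e_t\#\Qmin)\,dt<\infty$ (itself a consequence of $\Qmin$ being a minimizer with finite energy).
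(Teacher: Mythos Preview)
Your proof is correct and follows essentially the same route as the paper: a $\liminf$ from lower semicontinuity of $F$ plus Fatou (using that $\rho_N(t)\to e_t\#\Qmin$ narrowly for a.e.\ $t$), and a $\limsup$ obtained by dropping the nonnegative Wasserstein term and comparing $\JDN(Q_N)$ with $J(\Qmin)$ via the (upper bound) part of Proposition~\ref{prop:gammad}. The only differences are organizational: you first isolate the convergence $b_N\to\bar y$ of the full Moreau-envelope integral before passing to $\int F(\rho_N)$, and you make the subsequence-of-subsequence argument explicit, whereas the paper writes the key inequality $\int F(\rho_N)\le \int L\,d(\Qmin-Q_N)+\int F(e_t\#\Qmin)+G(\Qmin)-G(Q_N)+o(1)$ directly and leaves that detail implicit.
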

	
	\begin{proof}
		From lower semi-continuity of $F$, we already have,
		$$\int_{[0;T]\times\Omega}f(e_t\#\Qmin(x))dxdt\leq\liminf_{N\to\infty}\int_{[0;T]\times\Omega}f(\rho_N(t)(x))dxdt$$
		
		But, on the other hand, 
		\begin{equation*}
			\begin{split}
				\int_{[0;T]\times\Omega}f(\rho_N(t)(x))dxdt\leq &  \int_{[0;T]\times\Gamma}L(\gamma')d(\Qmin-Q_N)(\gamma)+\int_{[0;T]\times\Omega}f(e_t\#\Qmin(x))dxdt\\
				& +G(\Qmin)-G(Q_N)+o_{N\to\infty}(1)
			\end{split}
		\end{equation*}
		and taking the limsup as $N\to\infty$, we obtain 
		$$\limsup_{N\to\infty}\int_{[0;T]\times\Omega}f(\rho_N(t)(x))dxdt\leq\int_{[0;T]\times\Omega}f(e_t\#\Qmin(x))dxdt$$
		and our lemma.
	\end{proof}
	
	Strong convergence immediately follows in two cases:
	
	\begin{prop}
		\label{prop:cv_j_j*}
		Under the assumptions on $\epsilon_N$, $\mu_N^0$, $Q_N$ and $\Qmin$ listed above, 
		\begin{itemize}
			\item If \textbf{$f$ is strongly convex}, then $\rho_N$  strongly converges in $\mathbb{L}^2([0;T]\times\Omega)$ (as a function of $t$ and $x$), towards $\rho_{\min}$.
			\item If \textbf{$f:\rho\mapsto|\rho|^m$, $m\geq 2$ is a power}, then a similar strong convergence is true, this time in $\mathbb{L}^m([0;T]\times\Omega)$.
		\end{itemize}
	\end{prop}
	\begin{proof}
		Let us first handle the case where f is a power. In this case, Lemma \ref{lem:conv_cong} guarantees that $\norm{\rho_N}_{\mathbb{L}^m}$ converges to $\norm{\rho_{\min}}_{\mathbb{L}^m}$. Since $\rho_N$ already narrowly converges towards $\rho_{\min}$ , using a simple argument of approximation by continuous functions, this convergence is also a weak convergence in duality with $\mathbb{L}^{m'}$ where $m'=\frac{m}{m-1}$. But, from the convergence of the norms, this implies strong convergence in $\mathbb{L}^m$ using the Radon-Riesz property.
		\newline
		
		
		If $f$ is $m$-strongly convex, $m>0$, one can claim for any $N\in\N^*$ and almost any $t\in[0;T]$ and $x\in\Omega$,
		\begin{equation*}
			\frac{m}{8}\norm{\rho_N(t,x)-\rho_{\min}(t,x)}^2\leq \frac{1}{2}f(\rho_N(t,x))+\frac{1}{2}f(\rho_{\min}(t,x))-f\left(\frac{1}{2}(\rho_N(t,x)+\rho_{\min}(t,x))\right).
		\end{equation*}
		
		Integrating the right-hand side in $t$ and $x$, and taking the inf-limit as ${N\to\infty}$, one would obtain a negative value, from Lemma \ref{lem:conv_cong}  and the lower semi-continuity of $F$ (remember that $\rho_N$ narrowly converges to $\rho_{\min}$ from our previous \textbf{\textit{Lower bound}} properties). Looking at the integral of the left-hand side, this exactly states the strong-$\mathbb{L}^2$ convergence that we claimed.
	\end{proof}
	
	\begin{rmk}
		More generally, if there exists functions $j$ and $j^*$ on $\R$ and a constant $C>0$ such that for $p$ and $\rho$ in $\R$,
		$$f(\rho)+f^*(p)\geq p\cdot\rho+C|j(\rho)-j^*(p)|^2,$$ we get strong convergence of the functionals $j(\rho_N)$ towards $j(\rho_{\min})$ in $\mathbb{L}^2$, provided some invertibility on $j$, which implies strict convexity for $f$. This is a common assumption to show regularity results on the solutions of a convex problem, using duality (see for instance \cite{prosinski2017global}) and $j=j^*\equiv 0$ are always suitable for any convex function $f$. However, in the case of $f$ strongly convex, $j(\rho)=\rho$ with $j^*(p)=(f^*)'(p)$ are suitable and we recover the first case of Proposition \ref{prop:cv_j_j*}.	Similarly, $j(\rho)=\rho.|\rho|^{m/2-1}$ and $j^*(p)=p.|p|^{m'/2-1}$ are suitable in the situation $f\equiv |.|^m$. Again, we recover the $\mathbb{L}^m$ convergence claimed above.
	\end{rmk}

\section{The fully discrete problem}

\label{part:3}

We now use a uniform time discretization $0,~\delta, ...,~M\delta=T$ to compute a fully discretized version, with respect to space and time, of problem $\M$. Rather than writing heavy formulae for a new global energy, we will change the subset of $\Prob(\Gamma)$ upon which the minimization is done, allowing for an energy almost identical to $\JD$. The Lagrangian and potential parts will remain the same as in $J$ and $\JD$, whereas the congestion term will be approximated by a Riemann sum. This is mainly done in order to simplify computations, and any time-discretization of curves in $\Gamma$ which allows $\Wr$ bounds of the sort of \eqref{eq:approx1} and \eqref{eq:approx2} should also work here. 

We perform our optimization on the space of functions in $\Gamma$ which are affine on each interval $[i\delta, (i+1)\delta]$ with $i=0\dots M-1$, denoted $\Glin$. Our fully discrete problem is then:
\begin{align*}
	\label{pb:discr_x_t}
	&\MDD:\inf\left\{\JDD(Q)~\mid~ Q\in\Prob_N(\Glin)\text{ s.t. }e_0\#Q=\mu^0_N\right\}\\
	&\hbox{ with } \JDD(Q)=:=\int_{\Gamma}L(\gamma')dQ(\gamma) + \delta\sum_{i=1}^{M-1} F_{\epsilon}\left(e_{i\delta}\#Q\right) + G(Q)
\end{align*}

Similarly to $\MD$ and $\M$, we have existence of minimizers for any value of the parameters, and we omit the a demonstration as it would be almost identical:
\begin{prop}
	For every $N\in\N^*$, $\delta,~\epsilon>0$, $\JDD$ is l.s.c for the narrow convergence  and for every $\mu^0_N\in\Prob_N(\Omega)$, the infimum in $\MDD$ is attained.
\end{prop}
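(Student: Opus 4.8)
The plan is to mirror the direct-method arguments already used for $\M$ (Proposition \ref{prop:exist_cont}) and for $\MD$, adapting only the two points in which $\MDD$ differs: the congestion term is now a finite Riemann sum rather than a time integral, and the minimization is restricted to the subset $\Prob_N(\Glin)$.

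First I would establish lower semicontinuity of $\JDD$ term by term. The kinetic part $Q\mapsto\int_\Gamma\mathcal{L}\,dQ$ is l.s.c.\ for the narrow convergence by Theorem 4.5 of \cite{giusti2003direct} together with the growth bounds \eqref{Lagran}, exactly as before, and $G$ is l.s.c.\ by assumption. For the congestion part the situation is in fact simpler than for $\MD$: since each $e_{i\delta}:\Gamma\to\R^d$ is continuous, narrow convergence $Q_n\to Q_\infty$ forces $e_{i\delta}\#Q_n\to e_{i\delta}\#Q_\infty$ narrowly, so every map $Q\mapsto F_{\epsilon}(e_{i\delta}\#Q)$ is l.s.c.\ by the lower semicontinuity of $F_{\epsilon}$ on $\Prob(\R^d)$ (Proposition \ref{prop:lsc_cvg}). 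A finite sum of l.s.c.\ functions is l.s.c., so $\delta\sum_{i=1}^{M-1}F_{\epsilon}(e_{i\delta}\#\cdot)$ is l.s.c.\ and no appeal to Fatou's lemma is required here.

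For existence I would run the direct method. I take a minimizing sequence $(Q_n)_n\subset\Prob_N(\Glin)$ with $e_0\#Q_n=\mu^0_N$, which we may assume satisfies a uniform bound $\JDD(Q_n)\le C_0$. Since $F$ and $G$ are bounded below, the congestion term is bounded below, and hence the kinetic energies $\int_\Gamma\mathcal{L}\,dQ_n$ are bounded; with the compact sets $K_C=\{\gamma\in\Gamma\mid L(\gamma)\le C,\ \gamma(0)\in\Omega\}$ of Proposition \ref{prop:exist_cont}, the estimate $Q_n(\Gamma\setminus K_C)<\JDD(Q_n)/C$ yields tightness. By Prokhorov's theorem I then extract a subsequence converging narrowly to some $Q_\infty\in\Prob(\Gamma)$.

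The one genuinely new point — and the step I expect to demand the most care — is verifying that $Q_\infty$ is admissible, i.e.\ that $Q_\infty\in\Prob_N(\Glin)$ and $e_0\#Q_\infty=\mu^0_N$. Tightness is exactly what makes this work: since every atom of $Q_n=\frac1N\sum_{i=1}^N\delta_{\gamma_i^n}$ carries mass $1/N$, choosing $C$ with $C_0/C<1/N$ forces all $N$ curves $\gamma_i^n$ into the single compact set $K_C$; by Arzela--Ascoli I can extract (finitely many times) so that $\gamma_i^n\to\gamma_i^\infty$ uniformly for each $i$, whence $Q_\infty=\frac1N\sum_{i=1}^N\delta_{\gamma_i^\infty}$ is again a uniform $N$-atom measure. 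Moreover $\Glin$, being the finite-dimensional space of curves affine on each $[i\delta,(i+1)\delta]$ and thus parametrized by their nodal values, is closed in $(\Gamma,\norm{\cdot}_\infty)$; hence each $\gamma_i^\infty\in\Glin$ and $Q_\infty\in\Prob_N(\Glin)$. Continuity of $e_0$ gives $e_0\#Q_\infty=\lim_n e_0\#Q_n=\mu^0_N$. Finally, the lower semicontinuity established above yields $\JDD(Q_\infty)\le\liminf_n\JDD(Q_n)$, which equals the infimum in $\MDD$ since $(Q_n)_n$ is minimizing; therefore $Q_\infty$ is the desired minimizer.
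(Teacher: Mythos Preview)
Your proof is correct and follows precisely the approach the paper intends: the paper gives no proof at all for this proposition, merely noting that existence follows ``similarly to $\MD$ and $\M$,'' and your argument is exactly that --- the direct method of Proposition~\ref{prop:exist_cont}, with the congestion term now a finite sum of l.s.c.\ functionals (so Fatou is unnecessary) and with the extra check that the limit remains in $\Prob_N(\Glin)$, which you handle cleanly via the mass-$1/N$ argument and closedness of $\Glin$ in $\Gamma$. One cosmetic point: the Markov estimate should really read $Q_n(\Gamma\setminus K_C)\le \tfrac{1}{C}\int_\Gamma L\,dQ_n$ rather than $\JDD(Q_n)/C$, but since you already argued that $\int L\,dQ_n$ is uniformly bounded this does not affect the argument (and the paper commits the same shortcut).
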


What is more interesting is a  similar convergence result to the one in proposition \ref{prop:gammad}, with an additional constraint on the parameters of the time discretizations, $\delta_N$:
\begin{prop}
	\label{prop:gamma_cv_dd} Assume that  $(\delta_N)_N$, $(\epsilon_N)_N$ converge to $0$, and that $\mu^0_N$ narrowly converges towards $\mu^0$ in $\Prob(\R^d)$ as $N\to\infty$:
	\mbox{}
	\begin{itemize}
		\item \textbf{(Lower bound)} Let $(Q_N)_{N}$ narrowly converge to $Q_\infty$ in $\Prob(\Gamma)$. We then have $$J(Q_\infty)\leq\liminf\limits_{N\to\infty}\JDDN(Q_N).$$
		\item \textbf{(Upper bound)} Under the same assumptions as in Proposition \ref{prop:gammad}, and also assuming that  $(\delta_N)^{2/r'}=o(\epsilon_N)$ where $r'=\frac{r}{r-1}$ is the dual exponent for $r$, introduced in \eqref{Lagran}. Then for every sequence $(Q_N)_N$, with $Q_N$ a minimizer  respectively for $\MDDN$, we have $$\limsup\limits_{N\to\infty}\JDDN(Q_N)\leq J(\Qmin)$$.
	\end{itemize}
\end{prop}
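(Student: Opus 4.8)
The plan is to mirror the two-part structure of Proposition \ref{prop:gammad}, the only genuinely new ingredients being (i) the passage between the Riemann sum $\delta_N\sum_i F_{\epsilon_N}(e_{i\delta_N}\#Q_N)$ and the time integral $\int_0^T F(e_t\#\cdot)\,dt$, and (ii) the replacement of the quantized curves by piecewise-affine ones living in $\GlinN$. Both bounds will repeatedly use the following ``$\Gamma$-liminf'' property of the Moreau envelope, which follows exactly as in the lower-bound proof of Proposition \ref{prop:gammad}: if $\nu_N\to\nu$ narrowly and $\epsilon_N\to0$, then $\liminf_N F_{\epsilon_N}(\nu_N)\geq F(\nu)$ (take minimizers $\rho_N$; if the liminf is finite then $W_2^2(\nu_N,\rho_N)\to0$, so $\rho_N\to\nu$ narrowly, and one concludes by lower semicontinuity of $F$).

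For the \textbf{lower bound}, I would assume $\JDDN(Q_N)$ bounded (otherwise there is nothing to prove), so that the kinetic energies are bounded, the $Q_N$ are supported on equi-Hölder curves, and the family is tight. The kinetic term and $G$ are lower semicontinuous as before. For the congestion term I would introduce the step function $h_N(t):=F_{\epsilon_N}(e_{t_N}\#Q_N)$ with $t_N:=\delta_N\lfloor t/\delta_N\rfloor$, so that $\int_0^T h_N\,dt$ equals the Riemann sum up to the $i=0$ boundary term $\delta_N F_{\epsilon_N}(\mu^0_N)$, which is negligible since $\mu^0_N\to\mu^0$ and $\delta_N\to0$. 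Fatou's lemma (legitimate since $F_{\epsilon_N}\geq\inf F>-\infty$) gives $\liminf_N\int_0^T h_N\geq\int_0^T\liminf_N h_N$, so it suffices to check $\liminf_N h_N(t)\geq F(e_t\#Q_\infty)$ pointwise. Since $t_N\to t$ and the curves in $\mathrm{spt}(Q_N)$ are equi-Hölder, one has $e_{t_N}\#Q_N\to e_t\#Q_\infty$ narrowly (compare $\int\phi(\gamma(t_N))dQ_N$ to $\int\phi(\gamma(t))dQ_N$ using uniform continuity of $\phi$ and equicontinuity of the curves, then use narrow convergence of $Q_N$); the $\Gamma$-liminf property then closes this bound.

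For the \textbf{upper bound}, I would start from the admissible competitor $\tilde{Q}_{\mu^0_N,N}$ of Proposition \ref{prop:gammad} and let $Q_N^\delta:=P_{\delta_N}\#\tilde{Q}_{\mu^0_N,N}$, where $P_{\delta_N}$ is piecewise-affine interpolation at the grid; this lies in $\Prob_N(\GlinN)$ and keeps the correct initial marginal. Jensen's inequality applied to the convex $L$ on each interval shows interpolation does not increase the kinetic energy, so $\int_\Gamma \mathcal{L}\,dQ_N^\delta\leq\int_\Gamma \mathcal{L}\,d\Qmin$; and $Q_N^\delta\to\Qmin$ narrowly (interpolants converge uniformly on the equi-Hölder support), giving $G(Q_N^\delta)\to G(\Qmin)$. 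The crucial observation for the congestion term is that interpolation fixes the grid values, so $e_{i\delta_N}\#Q_N^\delta=e_{i\delta_N}\#\tilde{Q}_{\mu^0_N,N}$. I would then bound $F_{\epsilon_N}(e_{i\delta_N}\#Q_N^\delta)$ using, as competitor in the envelope, the time-average $\bar\rho_i:=\frac{1}{\delta_N}\int_{i\delta_N}^{(i+1)\delta_N}e_t\#\Qmin\,dt$: convexity of $F$ gives $\delta_N\sum_i F(\bar\rho_i)\leq\int_0^T F(e_t\#\Qmin)\,dt$, while convexity of $W_2^2(\cdot,\cdot)$ in its second argument bounds $W_2^2(e_{i\delta_N}\#\tilde{Q}_{\mu^0_N,N},\bar\rho_i)$ by a constant times $\tau_N+W_2^2(\mu^0,\mu^0_N)$ (via Lemma \ref{lem:time_global}, as in Proposition \ref{prop:gammad}) plus $\frac{1}{\delta_N}\int_{i\delta_N}^{(i+1)\delta_N}W_2^2(e_{i\delta_N}\#\Qmin,e_t\#\Qmin)\,dt$.

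The \textbf{main obstacle} is precisely this last term, which is where the hypothesis $(\delta_N)^{2/r'}=o(\epsilon_N)$ enters: since $\Qmin$ is carried by $\Wr$ curves, Hölder's inequality yields $\norm{\gamma(t)-\gamma(i\delta_N)}\leq|t-i\delta_N|^{1/r'}\norm{\gamma'}_{\mathbb{L}^r}$ and hence $W_2^2(e_{i\delta_N}\#\Qmin,e_t\#\Qmin)\lesssim\delta_N^{2/r'}$ on each subinterval, so that $\frac{\delta_N}{\epsilon_N}\sum_i W_2^2(e_{i\delta_N}\#\tilde{Q}_{\mu^0_N,N},\bar\rho_i)\lesssim \frac{\tau_N+W_2^2(\mu^0,\mu^0_N)+\delta_N^{2/r'}}{\epsilon_N}\to0$ under the three smallness assumptions. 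Taking the limsup in the resulting chain of inequalities then gives $\limsup_N\JDDN(Q_N)\leq J(\Qmin)$. The only delicate point requiring care is the finiteness of the moment $\int_\Gamma\norm{\gamma'}_{\mathbb{L}^r}^2\,d\Qmin$ controlling the implicit constant in the Hölder estimate, which is immediate when $r\geq2$ and otherwise needs a short truncation argument on the kinetic energy.
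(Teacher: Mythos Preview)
Your proposal is correct and follows essentially the same route as the paper. Two minor differences are worth noting. First, in the upper bound the paper does not introduce the time-averaged competitor $\bar\rho_i$: it simply uses $e_t\#\Qmin$ itself as the competitor in the Moreau envelope for each $t\in[i\delta_N,(i+1)\delta_N]$ and then integrates in $t$; your convexity-of-$W_2^2$ step recovers exactly this bound, so the detour through $\bar\rho_i$ is unnecessary. Second, the paper applies the Hölder-in-time estimate to $\tilde{Q}_N$ rather than to $\Qmin$ (splitting $W_2^2(e_{i\delta_N}\#\tilde{Q}_{\mu^0_N,N},e_t\#\Qmin)$ through $e_t\#\tilde{Q}_N$), which is cosmetically different but leads to the same three error terms $\tau_N,\;W_2^2(\mu^0,\mu^0_N),\;\delta_N^{2/r'}$. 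Your caveat about the finiteness of $\int_\Gamma\norm{\gamma'}_{\mathbb{L}^r}^2\,d\Qmin$ when $r<2$ is well taken; the paper does not address this point explicitly either, and on a bounded domain it is easily handled (e.g.\ via $\norm{\cdot}^2\leq D^{2-r}\norm{\cdot}^r$). One small slip: in your lower bound, the claim that $\delta_N F_{\epsilon_N}(\mu^0_N)\to0$ is not automatic from $\mu^0_N\to\mu^0$ and $\delta_N\to0$ alone (since $F_{\epsilon_N}(\mu^0_N)$ may blow up like $1/\epsilon_N$); it is cleaner to redefine $h_N$ to equal $\inf F$ on $[0,\delta_N)$, which makes Fatou go through without this boundary term.
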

\begin{proof}[Proof of Proposition \ref{prop:gamma_cv_dd}  (Lower bound):]
	We can assume that $\JDDN(Q_N)$ is bounded from above uniformly in $N$. In particular, $Q_N$ is supported in $\Wr$ for every $N$. Then, as before, $$\int_{\Gamma}L(\gamma')dQ_\infty(\gamma)\leq\liminf\limits_{N\to\infty} \int_{\Gamma}L(\gamma)dQ_N(\gamma).$$
	For every $\gamma\in \Wr$, and $i\delta_N\leq t\leq(i+1)\delta_N$, 
	\begin{equation}
		\label{eq:approx1}
		||\gamma(t)-\gamma(i\delta_N)||^2\leq \delta_N^{2/r'}\left(\int_0^T||\gamma'(u)||^r du\right)^{2/r}
	\end{equation}
	and, integrating this inequality along $Q_N$, we get $W_2^2(e_{i\delta_N}\#Q_N,e_t\#Q_N)\leq C\delta_N^{2/r'}$ for every $t$ in $]i\delta_N;\leq(i+1)\delta_N]$, since $\int_\Gamma L(\gamma')dQ_N(\gamma)$ is bounded. In particular, for every $t$, $e_{\Partentf{t/\delta_N}\delta_N}\#Q_N$ narrowly converges towards $e_t\#Q_\infty$. Then, by Fatou lemma,
	\begin{equation*}
		\begin{split}
			\int_0^T F(e_t\#Q_\infty)dt & \leq\liminf_{N\to\infty} \sum_{i=0}^{M_N-1}\int_{i\delta_N}^{(i+1)\delta_N}F_{\epsilon_N}(e_{i\delta_N}\#Q_N)dt\\
			& \leq \liminf_{N\to\infty}\delta_N\sum_{i=1}^{M_N-1} F_{\epsilon_N}(e_{i\delta_N}\#Q_N)
		\end{split}
	\end{equation*} 
	and that last term is exactly the congestion term in $\JDDN$. Finally, continuity of $G$ gives us our \emph{\textbf{(Lower bound)}} inequality.
\end{proof}

\begin{proof}[Proof of Proposition \ref{prop:gamma_cv_dd}  (Upper bound)]
	We momentarily fix $N\in\N^*$. Take $\tilde{Q}_N$ and $\tilde{Q}_{\mu^0_N,N}$ as in lemma \ref{lemma:quant} and the proof of Proposition \ref{prop:gammad} and define the piecewise affine interpolation operator, $\Tlin:\gamma\in\Gamma\mapsto\glin$ where, for $t$ in $[i\delta_N; (i+1)\delta_N]$, and $\gamma\in\Gamma$, $\glin(t)=\gamma(i\delta_N)+\frac{\gamma((i+1)\delta_N)-\gamma(i\delta_N)}{\delta_N}(t-i\delta_N)$. The measure $\Qtlin=\Tlin\#\tilde{Q}_{\mu^0_N,N}$ will take the role of competitor for the problem $\MDDN$, role that $\tilde{Q}_{\mu^0_N,N}$ had for problem $\MDN$.
	
	Then, convexity of $L$ gives us for every $N$, the inequalities
	$$\int_\Gamma L(\gamma')d\Qtlin(\gamma)\leq \int_\Gamma L(\gamma')d\tilde{Q}_N(\gamma)\leq \int_\Gamma L(\gamma')d\Qmin(\gamma).$$
	
	For $F_{\epsilon_N}$, we have, as previously, 
	\begin{equation*}
		\begin{split}
			\delta_N\sum_{i=1}^N F_{\epsilon_N}(e_{i\delta_N}\#\Qtlin)dt \leq &  \sum_{i=1}^N\int_{i\delta_N}^{(i+1)\delta_N}\frac{W^2_2(e_{i\delta_N}\#\tilde{Q}_{\mu^0_N,N},e_t\#\Qmin)}{2\epsilon_N}+F(e_t\#\Qmin)dt\\
			\leq &  \sum_{i=1}^N\int_{i\delta_N}^{(i+1)\delta_N}\frac{W^2_2(e_{i\delta_N}\#\tilde{Q}_N,e_t\#\tilde{Q}_N)}{\epsilon_N}+\frac{W^2_2(e_t\#\tilde{Q}_N,e_t\#\Qmin)}{\epsilon_N}dt\\
			& +T\frac{W^2_2(\mu^0_N,e_0\#\tilde{Q}_N)}{\epsilon_N}+\int_0^T F(e_t\#\Qmin)dt\\
			\leq & C\left[\frac{W_2^2(\mu^0,\mu^0_N)}{\epsilon_N}+\frac{\delta_N^{2/r'}}{\epsilon_N}+\frac{\tau_N}{\epsilon_N}\right]+\int_0^TF(e_t\#\Qmin)dt\\
		\end{split}
	\end{equation*}
	
	Finally, for $\gamma\in \Wr$ and $i\delta_N<t\leq(i+1)\delta_N$
	\begin{equation}
		\label{eq:approx2}
		\begin{split}
			\norm{\gamma(t)-\gamma(i\delta_N)-\frac{\gamma((i+1)\delta_N)-\gamma(i\delta_N)}{\delta_N}(t-i\delta_N)} & \leq \int_{i\delta_N}^t ||\gamma'(u)||du\\
			&+\frac{t-i\delta_N}{\delta_N}\int_{i\delta_N}^{(i+1)\delta_N}||\gamma'(u)||du\\
			& \leq 2\delta_N^{1/r'}\left(\int_0^T||\gamma'(u)||^rdu\right)^{1/r}
		\end{split}
	\end{equation}
	and, integrating along $\tilde{Q}_N$, $\lim_{N\to\infty}W^1_{\mathbb{L}^\infty}(\Qtlin,\tilde{Q}_N)=0$, therefore, by continuity of $G$ on $\Gamma$, $\lim_{N\to\infty}G(\Qtlin)=G(\Qmin)$. To conclude, we observe, as earlier that
	$$\JDDN(Q_N)\leq \JDDN(\Qtlin)\leq J(Q)+C\frac{\delta_N^{2/r'}+W_2^2(\mu^0,\mu^0_N)+\tau_N}{\epsilon_N}+G(\Qtlin)-G(Q)$$  and, as soon as $(\delta_N)_N$ is taken such that $\delta_N^{2/r'}=o_{N\to\infty}(\epsilon_N)$ along with the same growth for the other parameters as in Proposition \ref{prop:gammad}, one can conclude $\limsup_{N\to\infty}\JDDN(Q_N)\leq J(Q)$.
\end{proof}

As previously, minimizers of $\JDDN$ narrowly converge to minimizers of $J$, under these assumptions on $\epsilon_N$, $\delta_N$ and $\mu^0_N$.

\section{The Moreau envelope in the Wasserstein space}

\label{part:4}

In this section, we study more properties of $F_\epsilon$, and in particular, we show optimality conditions for the uncongested measure giving the regularized value of $F$ at a singular measure $\mu$. Let us recall that we assumed that $F$ be convex, lower semi-continuous and lower-bounded, with $\dom(F)\subset\Meas_+(\Omega)$.

To make expressions more concise, we will use from time to time the optimal transport cost associated to the cost $c_\epsilon(x,y)=\frac{||x-y||^2}{2\epsilon}$ instead of the standard squared norm (and, more importantly, the associated $c_\epsilon$-concave Kantorovich potentials $(\phi, \phi^{c_\epsilon})$ as well as the associated Laguerre cells in the semi-discrete case). Let us quickly recall the definition, for $\mu\in\Prob(\R^d)$: \begin{equation}
	\label{def:Feps_prim}
	F_\epsilon(\mu):=\inf\limits_{\rho\in\mathcal{M}(\Omega)}\frac{W_2^2(\rho,\mu)}{2\epsilon}+F(\rho)=\inf\limits_{\rho\in\mathcal{M}(\Omega)}I_{c_\epsilon}(\rho,\mu)+F(\rho).
\end{equation} 

The transport cost $I_{c_\epsilon}(.,\rho)$ (defined by \eqref{def:OT_prim}) is $+\infty$ outside of $\Prob(\R^d)$ and $F$ is $+\infty$ outside of $\dom(F)$ so that the infimum is, in the end, only taken on the intersection of those 2 sets.

\begin{prop}
	\label{prop:lsc_cvg}
	For every $\epsilon>0$, the infimum defining $F_\epsilon$ is attained and $F_\epsilon$ is l.s.c on $\Prob(\R^d)$.
	
	Furthermore, $\lim_{\epsilon\to0}F_\epsilon(\mu)=F(\mu)$ whereas $\lim_{\epsilon\to+\infty}F_\epsilon(\mu)=\inf_{\rho\in\Meas(\Omega)} F(\rho)$, assuming $\mu$ has finite second order moment.
\end{prop}

\begin{proof}
	The fact that the infimum is attained for every $\epsilon>0$ is straightforward and can be shown by the direct method of calculus of variations.
	
	Furthermore, let us take a sequence $\mu_n$ narrowly converging to $\mu_\infty$ in $\mathcal{P}(\Omega)$, and for every $n$, a measure $\rho_n\in \dom(F)\cap\mathcal{P}(\Omega)$ optimal for the problem defining $F_\epsilon(\mu_n)$. We may assume that $F_\epsilon(\mu_n)$ has a finite limit $l$ as $n$ goes to infinity. Using Prokhorov theorem, we can extract a subsequence from $(\rho_n)_n$, narrowly converging towards a $\rho_\infty\in \dom(F)$. We extract the corresponding subsequence from $\mu_n$ and rename these new sequences, $\rho_n$ and $\mu_n$.
	Then,
	\begin{equation*}
		\begin{split}
			F_\epsilon(\mu_\infty) & \leq \frac{W_2^2(\rho_\infty,\mu_\infty)}{2\epsilon} + F(\rho_\infty)\\
			& \leq \liminf\limits_n \frac{W_2^2(\rho_n,\mu_n)}{2\epsilon} + \liminf\limits_n F(\rho_n)\\
			& \leq \liminf\limits_n F_\epsilon(\mu_n)=l
		\end{split}
	\end{equation*}
	and this is the lower semi-continuity inequality.
\end{proof}

Being defined as the minimum of a convex function (we recall here that $F$ was taken convex and this is useful here), $F_\epsilon$ can be rewritten as the supremum of a concave dual problem using Fenchel-Rockafellar duality (see \cite{ekeland1999convex}). We make here two hypotheses on our congestion penalizing function $F$ to ensure that this dual problem has solutions. These may seem like demanding restrictions, but in the cases that interest us, they come very naturally.
\begin{prop}
	\label{prop:dual_opt}
	\mbox{}
	For any $\mu\in\Prob(\R^d)$,
	\begin{equation}
		\label{def:Feps_dual}
		F_{\epsilon}(\mu)=\sup\limits_{\varphi~{c_\epsilon}-concave}\int_{\R^d} \varphi d\mu -F^*(-\varphi^{c_\epsilon})
	\end{equation}
	with the definition of the Legendre transform $F^*$ of $F$ and the $c_\epsilon$-transform $\varphi^{c_\epsilon}$ of $\varphi$ given in Section \ref{part:Not}. This supremum is attained at $\varphi$ if and only if for any $\rho$ optimal for the primal problem,
	\begin{itemize}
		\item $(\varphi,~\varphi^{c_\epsilon})$ is a pair of $c_\epsilon$-concave Kantorovich potentials for the optimal transport from $\mu$ to $\rho$.
		\item $(-\varphi^{c_\epsilon})\in\partial F(\rho)$ (or, equivalently, $\rho\in\partial F^*(-\varphi^{c_\epsilon})$).
	\end{itemize}
	
	Assume furthermore that $F$ has non-empty subgradient at two measures $\rho^-$ and $\rho^+$ such that $\rho^-(\Omega)<1$ and $\rho^+(\Omega)>1$, and that $\mu$ is supported on a compact set. Then, this supremum is attained.
\end{prop}
\begin{proof}
	Take $$G:\rho\in\mathcal{M}(\Omega)\mapsto
	\begin{cases}
		I_{c_\epsilon}(\rho,\mu) & \text{if $\rho\in\mathcal{P}(\Omega)$}\\
		+\infty & \text{otherwise}
	\end{cases}$$

	For $\rho\in\mathcal{M}(\Omega)$, $G(\rho) = \sup_{\psi\in\mathcal{C}^0(\Omega)}\int_\Omega \psi d\rho + \int_{\R^d} \psi^{c_\epsilon} d\mu = \bar{G}^*(\rho)$, the Legendre transform of the convex continuous function  $\bar{G}:\psi\in\mathcal{C}^0(\Omega)\mapsto -\int_\Omega\psi^{c_\epsilon} d\mu$. Therefore, $G^*=\bar{G}$.
	\\
	
	We now have $G^*$ a convex continuous function,  and $F^*$ which is convex l.s.c and not $+\infty$ everywhere.
	Applying Fenchel-Rockafellar duality theorem to the following infimum problem:
	$$\inf_{\varphi\in\mathcal{C}^0(\Omega)}G^*(\varphi) + F^*(-\varphi),$$
	we can write:
	\begin{equation*}
		\begin{split}
			\inf_{\varphi\in\mathcal{C}^0(\Omega)}G^*(\varphi) + F^*(-\varphi)& =\max_{\rho\in\mathcal{M}(\Omega)}-G(\rho)- F(\rho)\\
			& = -\min_{\rho\in\mathcal{M}(\Omega)}G(\rho)+F(\rho)\\
			& = -\min_{\rho\in\mathcal{P}(\Omega)}\frac{W^2_2(\rho,\mu)}{2\epsilon}+F(\rho)
		\end{split}
	\end{equation*}
	But, this inf problem also rewrites:
	\begin{equation*}
		\begin{split}
			\inf\limits_{\varphi\in\mathcal{C}^0(\Omega)}{G^*(\varphi) +F^*(-\varphi)}& =-\sup\limits_{\varphi\in\mathcal{C}^0(\Omega)}\int_\Omega{\varphi^{c_\epsilon}(x)d\mu(x)} -F^*(-\varphi)\\
			& = -\sup\limits_{\varphi\text{ $c_\epsilon$-concave}}\int_\Omega{\varphi^{c_\epsilon}(x)d\mu(x)} -F^*(-\varphi)\\
		\end{split}
	\end{equation*}
	where the last equality is a consequence of  $F^*(-\varphi)\geq F^*(-\varphi^{c_{\epsilon}c_{\epsilon}})$ for all $\varphi\in\mathcal{C}^0(\Omega)$, true since $\partial F^*$ is composed of positive measures only, by hypothesis. We can therefore take our supremum only on the $\varphi$ that are $c_\epsilon$-concave.
	
	Finally, up to a change of variable $\varphi\mapsto\varphi^{c_\epsilon}$ we obtain the primal and dual problems that we claimed:
	\begin{equation*}
		\min\limits_{\rho\in\mathcal{P}(\Omega)}\frac{W^2_2(\rho,\mu)}{2\epsilon}+F(\rho) =\sup\limits_{\varphi\text{ $c_\epsilon$-concave}}\int_\Omega\varphi(x)d\mu(x) -F^*(-\varphi^{c_\epsilon})
	\end{equation*}
	
	Optimality conditions for both problems are straightforward. Indeed, for every $\varphi$ and $\rho$, both admissible for their respective problem, we have $$\int_\Omega \varphi d\mu +\int_\Omega \varphi^{c_\epsilon} d\rho\leq \frac{W_2^2(\mu,\rho)}{2\epsilon}$$
	with equality if and only if $(\varphi,\varphi^{c_\epsilon})$ are Kantorovich potentials for the transport from $\mu$ to $\rho$, and, $$F^*(-\varphi^{c_\epsilon})+F(\rho)\geq -\int_\Omega\varphi^{c_\epsilon}(x)d\rho(x)$$ with equality iff $\rho\in\partial F^*(-\varphi^{c_\epsilon})$. Summing up these inequalities, and canceling the opposite terms, we get exactly
	$$F(\rho)+\frac{W^2_2(\rho;\mu)}{2\epsilon}\geq \int_\Omega\varphi(x)d\mu(x)-F^*(-\varphi^{c_\epsilon})$$
	with equality if and only if $(\varphi,\varphi^{c_\epsilon})$ are Kantorovich potentials for the transport from $\mu$ to $\rho$ and, $\rho\in\partial F^*(-\varphi^{c_\epsilon})$.
	\newline
	
	Now, for the existence part of the proposition, let $K$ be the compact support of $\mu$. Following the standard method in calculus of variations, consider a maximizing sequence of $c$-concave functions for the dual problem, $(\varphi_n)_{n}$.
	These functions all have the same Lipschitz constant as $c_\epsilon$, on the compact set $K$. Let $L$ be this common Lipschitz constant, and for $n\in\N$ $M_n=\max_K\varphi_n$, so that for any $y\in K$, $M_n-L \mathrm{diam}(K)\leq\varphi_n(y)\leq M_n$. Setting $C=\max_{x\in K,~y\in \Omega}c_\epsilon(x,y)$, this gives us the bounds, for any $y\in\Omega$ and $n\in\N$, $$-M_n\leq\varphi_n^{c_\epsilon}(y)\leq C-M_n-L .\mathrm{diam}(K)=A-M_n,$$
	the constant $A$ depending only on $\epsilon$, $\Omega$ and the discrete measure $\mu$.
	
	Assume now that $M_n$ diverges towards $+\infty$ as $n\to\infty$ (this is equivalent to $(\varphi_n)_n$ not uniformly bounded from above on $K$).
	Since there exists $\varphi^+$ such that $\varphi^+\in\partial F(\rho^+)$ with $\rho^+$ of mass strictly more than 1, or equivalently, $\rho^+\in \partial F^*(\varphi^+)$, we can write:
	
	\begin{align*}
		\int_\Omega\varphi_n(x)d\mu(x) -F^*(-\varphi_n^{c_\epsilon})\leq & M_n-F^*(\varphi^+)-\int_\Omega(-\varphi^{c_\epsilon}-\varphi^+)d\rho^+\\
		\leq& M_n(1-\rho^+(\Omega))+A\rho^+(\Omega)-F^*(\varphi^+)+\int_\Omega \varphi^+d\rho^+
	\end{align*}
	and that last part diverges to $-\infty$ as $n\to\infty$, which is absurd since $(\varphi_n)_n$ is a maximizing sequence. Similarly, if $M_n$  diverges towards $-\infty$ as $N\to\infty$, the fact that $F$ has a non-empty subgradient at a measure of mass strictly less than 1 gives us again that $\varphi_n$ cannot be a maximizing sequence.
	
	Therefore, $(\varphi_n)_n$ is uniformly bounded and, using Arzela-Ascoli theorem, we can extract from it a subsequence that converges uniformly on $K$, as $n\to\infty$. By upper semi-continuity of the functions in the dual problem, this limit is a maximizer.
\end{proof}

\begin{rmk}
	As we mentioned, the hypotheses on $F$ are very natural ones considering our congestion terms have the integral form \eqref{eq:expr_int} in our numerical simulations. However, they are not the sharpest ones to obtain existence as one can see in this simple example: On a domain with area 1, for $F$ of the form $F(\rho)=\int_\Omega f(\rho(x))dx$ and $f=\chi_{\{1\}}$ (only a density equal to 1 almost everywhere is allowed), one can check that the dual problem admits solutions which are the classical Kantorovich potentials for the corresponding transport, since $F_{\epsilon}=\frac{W_2^2(.,dx)}{2\epsilon}$. However, $F$ itself does not satisfy the assumptions in Proposition \ref{prop:dual_opt}, since it is only finite at the Lebesgue measure which is of mass exactly 1.
\end{rmk}

Allowing $F$ to be very general can allow the use of less "congestion-focused" penalizations, such as $F(\rho)=\int_\Omega \frac{\norm{\nabla\rho}^2}{\rho}$, which can appear when viewing second-order mean field games with entropy penalization as first order mean field games (this is done in particular in \cite{gentil2017analogy} or \cite{chen2016relation}). However, in this paper, we will concentrate on the cases where $F$ has the following form:

\begin{equation}
	\label{eq:expr_int}
	F(\rho)=\begin{cases}
		\int_\Omega f(\rho(x))dx & \text{if $\rho\ll dx$}\\
		+\infty & \text{otherwise.}
	\end{cases}
\end{equation}
with $f$ l.s.c, convex, and superlinear (on $\R$). Finally, our hypotheses to guarantee existence of dual solutions are equivalent to assuming that $\frac{1}{|\Omega|}$ is in the interior of $\dom( f)$ as the following proposition justifies:

\begin{prop}
	\label{prop:a_e_x}
	Let $F$ be defined by \eqref{eq:expr_int}, with $f$ a convex, l.s.c, superlinear function. Then for any $\varphi\in\mathcal{C}^0(\Omega)$, and $\rho\in\mathcal{M}(\Omega)$,  
	$\varphi\in\partial F(\rho)$ if and only if $\rho\ll dx$ and for a.e $x\in\Omega$, $\varphi(x)\in\partial f(\rho(x))$. 
\end{prop}
\begin{proof}
	Let $\rho$, $\varphi$ be as in the proposition.
	Then by definition of the Legendre transform, $\varphi\in(\partial F)(\rho)$ if and only if $\rho\ll dx$
	and
	\begin{equation}
		\label{eq:equa_int}
		\int_{\Omega}f(\rho(x))dx+\int_{\Omega}f^*(\varphi(x))dx=\int_\Omega \varphi(x)\rho(x)dx
	\end{equation}
	Now, for a.e. $x\in\Omega$, there holds $f(\rho(x))+f^*(\varphi(x))\geq\varphi(x)\rho(x)$, therefore \eqref{eq:equa_int} is equivalent to
	\begin{equation}
		\label{eq:equa_a_e}
		f(\rho(x))+f^*(\varphi(x))\geq\varphi(x)\rho(x)\text{, for a.e. $x$ in $\Omega$}
	\end{equation}
	which is itself equivalent to $\varphi(x)\in(\partial f)(\rho(x))$ almost everywhere on $\Omega$.
\end{proof}

In the cases of interest to us in Sections \ref{part:2} and \ref{part:3}, $\mu$ is a discrete measure of $\R^d$ of the form $e_t\#Q$ for some $Q\in\Prob_N(\Gamma)$. In this case, Proposition \ref{prop:dual_opt} rewrites as a concave finite dimensional problem, in a very similar way to optimal transport costs in semi-discrete settings. Here maximization is done not on $c_\epsilon$-concave functions $\varphi$ but on their values at the points in the finite support of $\mu$,  $\phi_i=\varphi(y_i)$, which is simply seen as a vector in $\R^N$:

\begin{prop}
	\label{prop:opti_semid}
	Assume that $\frac{1}{|\Omega|}\in \mathrm{int}(\dom(f))$.
	
	Then, for $y\in(\R^d)^N$, we have 
	\begin{equation}
		\label{def:Feps_dual_disc}
		\mathcal{F}_\epsilon(Y):=F_\epsilon\left(\bary{y_i}\right)=\max_{\Phi\in\R^N}\sum\limits_{i=1}^{N}\left[\frac{\phi_i}{N} -\int_{\Lag{i}{Y}{\Phi}}f^*\left(\phi_i-\frac{\norm{x-y_i}^2}{2\epsilon}\right)dx\right]
	\end{equation}
	with the defintion of the Laguerre cells $\Lag{i}{Y}{\Phi}$ from the Preliminaries, \eqref{def:Laguerre} and a cost $c=c_\epsilon$. 
	
	A pair $\rho\in\Prob(\Omega)$ and $\Phi\in \R^N$ are optimal for respectively the primal and dual problems defining $\mathcal{F}_\epsilon(Y)$ if and only if the following conditions hold: For every $i=1\dots N$
	\begin{itemize}
		\item \textbf{(Area)} $\int_{\Lag{i}{Y}{\Phi}}(f^*)'(\phi_i-c_\epsilon(x,y_i))dx=\frac{1}{N}$ 
		\item \textbf{(Density)} $\rho(x)=(f^*)'(\phi_i-c_\epsilon(x,y_i))$,  for a.e. $x\in \Lag{i}{Y}{\Phi}$
	\end{itemize}
	In particular, the optimal $\rho$ is unique.
\end{prop}
\begin{rmk}
	Uniqueness of $\rho$ depends very much on the regularity of $\Phi^{c_\epsilon}:x\in\Omega\mapsto c_\epsilon(x,y_i)-\phi_i$, through the equality \textbf{\emph{(Density)}}. As such regularity cannot be demanded of $\rho$, the dual problem \eqref{def:Feps_dual_disc} could admit multiple solutions $\Phi$.
\end{rmk}
\begin{proof}
	
	For any $Y\in(\R^d)^N$, \begin{equation}
		\label{eq:Feps_sup}
		\mathcal{F}_\epsilon(Y)\leq \sup_{\phi\in\R^N}\sum_{i=1}^N\left[\frac{\phi_i}{N}-\int_{\Lag{i}{Y}{\Phi}}f^*(\phi_i-c_\epsilon(x,y_i))dx\right].
	\end{equation}
	Indeed, let $\varphi$ be solution of dual problem \eqref{def:Feps_dual} for $\mu=\bary{y_i}$. For any $x\in\Omega$,
	$$\varphi^{c_\epsilon}(x)=\inf_{y\in\R^d} c_{\epsilon}(x,y)-\varphi(y)\leq\inf_i c_{\epsilon}(x,y_i)-\varphi(y_i)$$
	and since $f^*$ is non-decreasing, inequality \eqref{eq:Feps_sup} is immediate.
	
	For the other inequality, similar arguments as for Proposition \ref{prop:dual_opt} guarantee that the supremum in \eqref{eq:Feps_sup} is indeed a maximum, and that if $\phi\in\R^N$ is optimal it must satisfy \textbf{\emph{(Area)}}. Take such a $\Phi$ in $\R^N$ optimal, then for any $\rho'\in\Prob(\Omega)$, $\rho'\ll dx$
	\begin{equation*}
		\begin{split}
			\frac{W_2^2(\rho',\mu)}{2\epsilon}+\int_\Omega f(\rho'(x))dx\geq &\sum_{i=1}^N\frac{\phi_i}{N}+\int_{\Lag{i}{Y}{\Phi}}(c_\epsilon(x,y_i)-\phi_i)d\rho'+\int_{\Lag{i}{Y}{\Phi}}f(\rho'(x))dx\\
			\geq & \sum_{i=1}^N\frac{\phi_i}{N}-\int_{\Lag{i}{Y}{\Phi}}(\phi_i-c_\epsilon(x,y_i))\rho'(x)-f(\rho'(x))dx\\
			\geq & \sum_{i=1}^{N}\frac{\phi_i}{N}-\int_{\Lag{i}{Y}{\Phi}}f^*(\phi_i-c_\epsilon(x,y_i))dx=\mathcal{F}_\epsilon(Y)
		\end{split}
	\end{equation*}
	and we have the reverse inequality. 
	
	To conclude, notice that this is an equality if and only if (1) the optimal transport from $\rho'$ to $\mu$ is given by the Laguerre cells $\Lag{i}{Y}{\Phi}$ and (2)$\rho(x)$ belongs to $\partial f^*(\phi_i-c_\epsilon(x,y_i))$ for every $i=1\dots N$ and a.e. $x\in\Lag{i}{Y}{\Phi}$. Let us rewrite (2) as the equality almost everywhere \textbf{\emph{(Density)}}:
	
	On one hand, the level-sets of the function $\sup_i\phi_i-c_\epsilon(.,y_i)$ are Lebesgue negligible and on the other, $f^*$ is convex and continuous on $\R$ because $f$ is superlinear, and therefore, $f^*$ is differentiable on $\R$ except at most at a countable number of points. Therefore, $\partial f^*(\phi_i-c(x,y_i))$ is a singleton for Lebesgue-a.e. $x\in\Lag{i}{Y}{\Phi}$, and (2) is equivalent to $\rho(x)=(f^*)'(\phi_i-c(x,y_i))$ a.e. on this Laguerre cell (which is exactly \textbf{\emph{(Density)}}). But then, \textbf{\emph{(Area)}} is exactly equivalent to (1) (see optimality conditions \eqref{eq:optim_cond_OT_semid} in the Preliminaries). Finally, \textbf{\emph{(Density)}} uniquely defines almost everywhere the optimal $\rho$ for the primal formulation \eqref{def:Feps_prim} if we fix a $\Phi$ optimal for the dual problem \eqref{def:Feps_dual_disc} and this concludes¸ our proof.
\end{proof}

\section{Numerics}

\label{part:6}

Let us begin this section by computing the derivatives necessary in order to numerically approximate the Moreau-Yosida regularizations of discrete measures in $\Prob_N(\R^d)$, as well as solutions of the fully discrete problem $\MDDN$. The regularized measures are computed using the Pysdot library for Python. The optimal weights in the dual formulation of Proposition \ref{prop:opti_semid} can be approximated using a Newton algorithm on the maximized function, 
$$\mathcal{F}_{\epsilon,Y}:\Phi\in\R^N\mapsto \sum\limits_{i=1}^{N}\left[\frac{\phi_i}{N} -\int_{\Lag{i}{Y}{\Phi}}f^*\left(\phi_i-\frac{\norm{x-y_i}^2}{2\epsilon}\right)dx\right]$$
which is concave and smooth for any $Y=(y_1,\dots, y_N)$ and $\epsilon>0$ :
\begin{prop}
	$\mathcal{F}_{\epsilon,Y}$ is $\mathcal{C}^1$, concave, and for $i=1\dots N$, and $\phi\in\R^N$, we have
	$$\frac{\partial}{\partial\phi_i}  \mathcal{F}_{\epsilon,Y}(\Phi)=\frac{1}{N} -\int_{\Lag{i}{Y}{\Phi}}(f^*)'\left(\phi_i-\frac{\norm{x-y_i}^2}{2\epsilon}\right)dx$$
\end{prop}
\begin{proof}
	Take, $\Phi,~\Psi\in\R^N$:
	
	\begin{align*}
		\mathcal{F}_{\epsilon,Y}(\Phi)-\mathcal{F}_{\epsilon,Y}(\Psi) =& \begin{multlined}[t]
			\sum\limits_{i=1}^N \Bigg[\frac{\phi_i-\psi_i}{N}-\Bigg(\int_{\Lag{i}{Y}{\Phi}}f^*\left(\phi_i-\frac{\norm{x-y_i}^2}{2\epsilon}\right)dx\\
			-\int_{\Lag{i}{Y}{\Psi}}f^*\left(\psi_i-\frac{\norm{x-y_i}^2}{2\epsilon}\right)dx\Bigg)\Bigg]
		\end{multlined}\\
		\leq & \begin{multlined}[t]\sum\limits_{i=1}^N \Bigg[\frac{\phi_i-\psi_i}{N}-\Bigg(\int_{\Lag{i}{Y}{\Psi}}f^*\left(\phi_i-\frac{\norm{x-y_i}^2}{2\epsilon}\right)\\
			-f^*\left(\psi_i-\frac{\norm{x-y_i}^2}{2\epsilon}\right)dx\Bigg)\Bigg]
		\end{multlined}
	\end{align*}
	and $$\mathcal{F}_{\epsilon,Y}(\Phi)-\mathcal{F}_{\epsilon,Y}(\Psi)\leq \sum\limits_{i=1}^N \left[\frac{\phi_i-\psi_i}{N}-\int_{\Lag{i}{Y}{\Psi}}(f^*)'\left(\psi_i-\frac{\norm{x-y_i}^2}{2\epsilon}\right)(\phi_i-\psi_i)dx\right].$$
	
	The first inequality comes from the definition of $\Lag{i}{Y}{\Phi}$ and the non-decreasing nature of $f^*$ and the second one, from the convexity of $f^*$ and the fact that the $c_\epsilon$-transform of $\Psi$ has Lebesgue-negligible level sets.
	\\
	
	This last inequality implies that  for every $\Psi\in\R^N$, \begin{equation}
		\label{eq:supergradient}
		\left(\frac{1}{N}  -\int_{\Lag{i}{Y}{\Psi}}(f^*)'\left(\psi_i-\frac{\norm{x-y_i}^2}{2\epsilon}\right)dx\right)_{i=1\dots N}\in\partial^+ \mathcal{F}_{\epsilon,Y}(\Psi)
	\end{equation} 
	where we recall that "$\partial^+$" denotes the supergradient of the concave function $\mathcal{F}_{\epsilon,Y}$. This is a sufficient condition for $\mathcal{F}_{\epsilon,Y}$ to be concave. Finally,  $(f^*)'$ defines a continuous function on $\R$, except on an at most countable set of real numbers, and the level set of $x\mapsto \phi_i-\frac{||x-y_i||^2}{2\epsilon}$ are negligible, as are the boundaries of the Laguerre cells. By dominated convergence, \eqref{eq:supergradient} constitutes a continuous choice of supergradients for $\mathcal{F}_{\epsilon,Y}$ and therefore, this function is $\mathcal{C}^1$ with the partial derivatives that we claimed.\qedhere
\end{proof}
\begin{rmk}
	With $\mathcal{F}_{\epsilon,Y}$ being a concave function, this proposition is another way to show the optimality condition \textbf{\emph{(Area)}} on the masses of the Laguerre cells. Note also, although this reaches beyond the scope of this paper, that this expression is differentiable one more with respect to the weights $\Phi$ (the precise expressions can be found, using Lemma 1.1 of \cite{degournay:hal-01721681}), formally or for smooth $f$. The resulting Hessian is invertible when the Laguerre cells associated to the weights have positive mass, in some sense. These remarks encourage the use of a damped Newton algorithm similarly to what is done in \cite{kitagawa2016convergence} in order to approximate semi-discrete optimal transport. 
	
	These expressions are very reminiscent of those obtained by Bourne, Schmitzer and Wirth in \cite{Bourne2018SemidiscreteUO} for unbalanced semi-discrete optimal transport. Here, our congestion penalization plays the role of the mass discrepancy penalization between our discrete measure $\mu$ and the Lebesgue measure on $\Omega$, which does not have mass 1 under the assumptions of proposition \ref{prop:opti_semid}. This suggests that one could rewrite  $F_\epsilon$ as an unbalanced transport problem between these measures.
\end{rmk}

Subsequently, in order to numerically solve $\MDDN$ from \ref{part:6}, we regard $\JDDN$ as a function of the positions of the Dirac masses at various time steps. Its gradient can easily be computed from the gradient of $\mathcal{F}_\epsilon$ and a we use a quasi-Newton algorithm in order to numerically approximate the optimal positions:
\begin{prop}[Gradient of $\mathcal{F}_\epsilon$]
	\label{prop:grad_bary}
	For $Y=(y_1,\dots,y_N)\in(\mathbb{R}^d)^N$, let $\rho_Y,\Phi_Y$ be  optimal for the primal problem defining $\mathcal{F}_\epsilon(Y)$.
	Let us denote for $i=1\dots N$ by $b_i(Y)$ the  barycenter of the ith Laguerre cell, in the dual formulation \eqref{def:Feps_dual_disc},
	\begin{equation}
		b_i(Y)=N\int_{\Lag{i}{Y}{\Phi_Y}}xd\rho_Y(x)
	\end{equation}
	The Moreau envelope $\mathcal{F}_\epsilon$ is $\mathcal{C}^1$ on $D_N:=(\mathbb{R}^d)^N\backslash \{Y~|~\exists i\neq j,~ y_i=y_j \}$ and for every $Y\in D_N$,
	$$\partial_{y_i} \mathcal{F}_\epsilon(Y)=\frac{y_i-b_i(Y)}{N\epsilon}$$
\end{prop}

\begin{figure}[t]
	\label{fig:grad_F}
	\centering
	\includegraphics[width=0.3\textwidth]{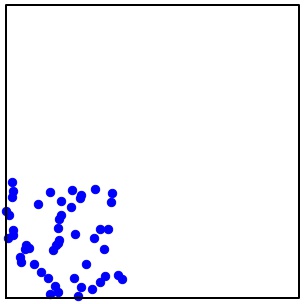}
	\includegraphics[width=0.3\textwidth]{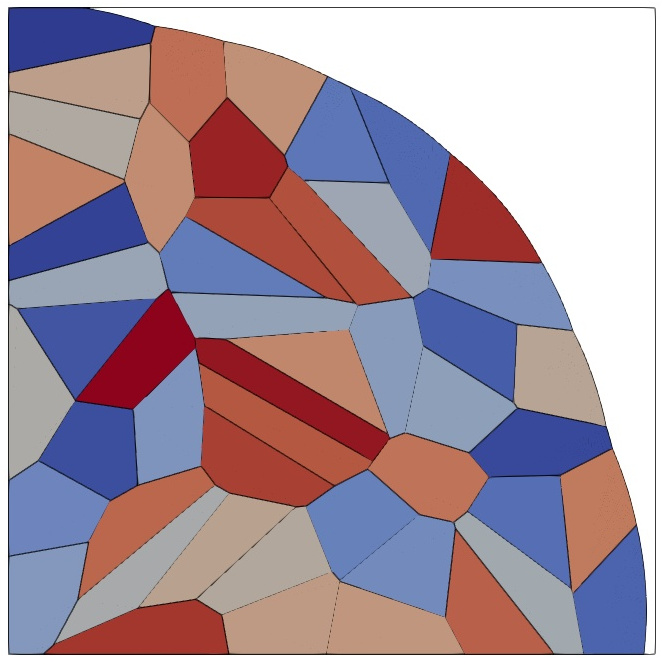}
	\includegraphics[width=0.3\textwidth]{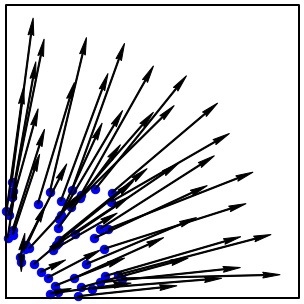}
	\caption{From left to right, (1) 50 points scattered in the bottom left corner of a $5\times5$ square, (2) the "charged" Laguerre cells obtained by intersecting the Laguerres cells with the support of the optimal $\rho$ defining $F_\epsilon$ and (3) the vectors joining each point to the barycenter of its Laguerre cell. $\partial_{y_i} \mathcal{F}_\epsilon$ is colinear, opposite, to the corresponding vector $b_{y,i}-y_i$, for $i=1\dots N$.}
\end{figure}

\begin{proof}
	We show that $H:Y\mapsto \mathcal{F}_\epsilon(Y)-\frac{1}{2N\epsilon}\sum_{i=1}^N \norm{ y_i}^2$ is concave on $(\mathbb{R}^d)^N$ and the vector $-\frac{1}{N\epsilon}(b_i(Y))_i$ belongs to the supergradient $\partial^+H(Y)$. Showing that this is a selection of supergradients wich is continuous on $D_N$ will, as previously, prove that $H$ and therefore $\mathcal{F}_\epsilon$ is $\mathcal{C}^1$, on this set. The supergradient is still valid in configurations where several points coincide, but continuity, and more generally, the fact that the supergradient is a singleton, does not hold for such points.
	\\
	
	First, take $X,Y\in(\mathbb{R}^d)^N$,
	\begin{align*}
		H(Y) & \leq \sum\limits_{i=1}^N\int_{\Lag{i}{X}{\Phi_X}}\frac{1}{2\epsilon}(||z||^2-2z\cdot y_i+||y_i||^2)\rho_X(z) + f(\rho_X(z))dz- \frac{1}{2N\epsilon}\sum\limits_{i=1}^N\norm{y_i}^2\\
		& \leq H(X)-\sum\limits_{i=1}^N\frac{1}{\epsilon}\int_{\Lag{i}{X}{\phi_X}}zd\rho_X(z)\cdot (y_i-x_i)\\
		& \leq H(X)-\sum\limits_{i=1}^N \frac{b_i(X)}{N\epsilon}
		\cdot (y_i-x_i)
	\end{align*}
	which exactly means that $H$ is concave and that $\left(-\frac{1}{N\epsilon} b_i(X)\right)_i\in\partial^+H(X)$ for every $X\in(\R^d)^N$.
	\newline
	
	In order to make the rest of the proof more concise, we will write for $Y\in(\R^d)^N$, $\mu_Y$ the measure $\bary{y_i}$ and for $\Phi\in\R^N$, $\Phi^{c_\epsilon}$ the continuous bounded function on $\Omega$ equal to $c_\epsilon(.,y_i)-\phi_i$ on the Laguerre cell $\Lag{i}{Y}{\Phi}$.
	
	Let us now show that $Y\in D_N\mapsto (b_i(Y))_i$ is continuous: Take $(Y^n)_n$ a sequence converging to $Y$ in $D_N$. Let for all $n$, $\Phi_{Y^n}\in\R^N$ and $\rho_{Y^n}= (f^*)'(-\Phi_{Y^n}^{c_\epsilon})$,  be optimal for respectively the primal and dual problems defining $\mathcal{F}_\epsilon(Y^n$). In particular, the functions $(\Phi_{Y^n})^{c_\epsilon}$ are $c_\epsilon$-concave Kantorovich potential in the transport from $\rho_{Y^n}$ to $\mu_{Y^n}$.
	
	By similar arguments as for Proposition \ref{prop:dual_opt}, up to a subsequence, $(\Phi_{Y^n})_n$ converges towards a $\Phi\in\R^N$ (and so, $(\Phi_{Y^n})^{c_\epsilon}$ uniformly converges on $\Omega$ towards $\Phi^{c_\epsilon}$). Using again the fact that $\Phi_{Y^n}^{c_\epsilon}$ has Lebesgue-negligible level-sets, $\rho_{Y^n}$ converges almost everywhere  (up to the same subsequence) towards the (unique) optimal $\rho_Y=(f^*)'(-\Phi^{c_\epsilon})$. 
	Furthermore, since $(f^*)'$ is increasing and the convergence of  $(\Phi_{Y^n})^{c_\epsilon}$ is uniform, $(\rho_{Y^n})_n$ is bounded in $\mathbb{L}^\infty(\Omega)$ and the convergence is also a weak-* convergence in $\mathbb{L}^\infty(\Omega)$. 
	On the other hands, $\Phi^{c_\epsilon}$ is a $c_\epsilon$-concave Kantorovich potential for the optimal transport from $\rho$ to $\mu_Y$, since $\mu_{Y^n}$ narrowly converges towards $\mu_Y$ and $\rho_{Y^n}$ towards $\rho$ in their respective spaces.  In particular, $b_i(Y)=N\int_{\Lag{i}{Y}{\Phi}}x\rho_Y(x)dx$.
	
	The final argument is both one of dominated convergence and the fact that $b_i(Y)$ is the only cluster point of $(b_i(Y_n))_n$. Since $(\indfonc_{\Lag{i}{Y^n}{\Phi_{Y^n}}})_n$ converges almost everywhere on $\Omega$ towards $\indfonc_{\Lag{i}{Y}{\Phi}}$ and similarly $\rho_{Y^n}$ towards $\rho_Y$, and both functions are bounded on $\Omega$, one immediately obtains that along the same subsequence as before, $b_i(Y^n)$ converges to $b_i(Y)$. Since $(Y^n)_n$ was taken freely in $D_N$, this last barycenter is indeed the only cluster point and this concludes our proof.
\end{proof}

In the following numerical simulations, we solely consider a simple kinetic term given by the squared norm,  $L:x\mapsto\frac{1}{2}\norm{x}^2$, or equivalently, $L(\gamma')=\int_{0}^{T}\frac{\norm{\gamma'(t)}}{2}dt$ with our abuse of notations. The congestion term $F$ and the potential energy $G$ have to be somewhat tailored to each domain, however, they will be variations on the "hard congestion" $\rho\leq 1$ for $F$ and an average distance to target points for $G$.
\newline

\begin{figure}
\centering
\includegraphics[width=0.23\textwidth]{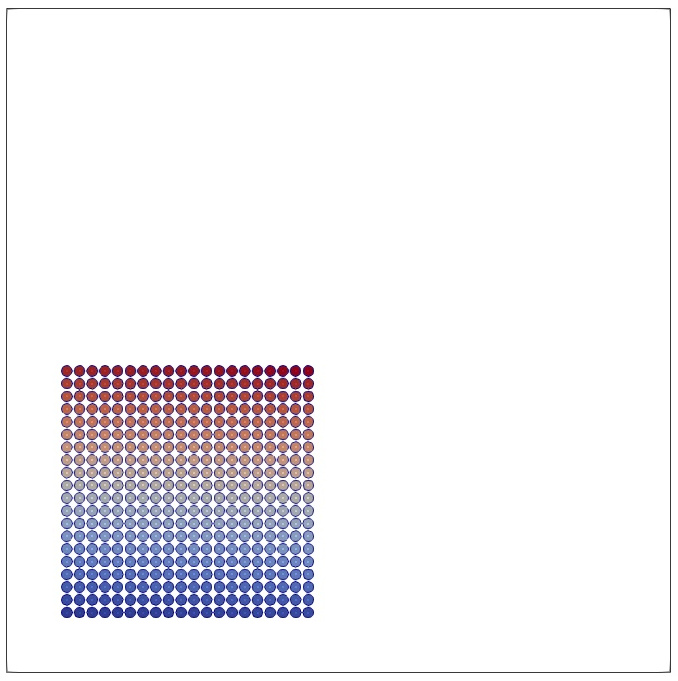}
\includegraphics[width=0.23\textwidth]{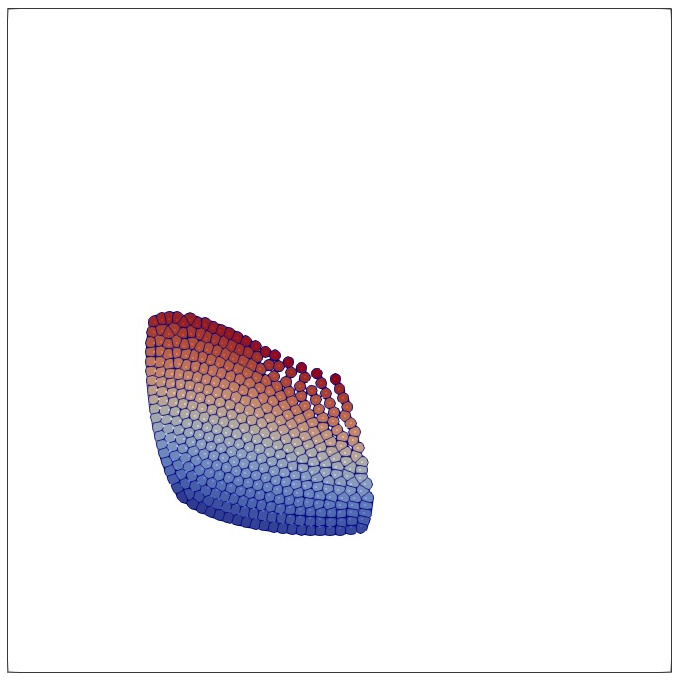}
\includegraphics[width=0.23\textwidth]{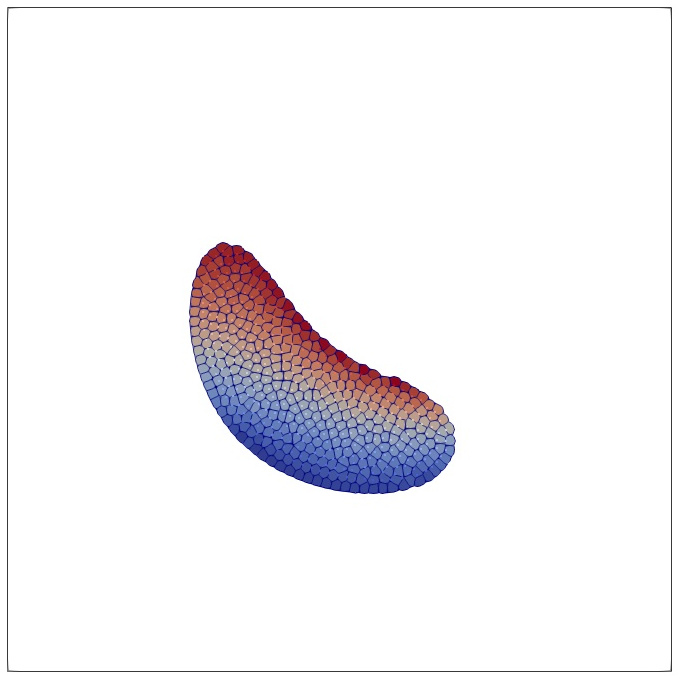}
\includegraphics[width=0.23\textwidth]{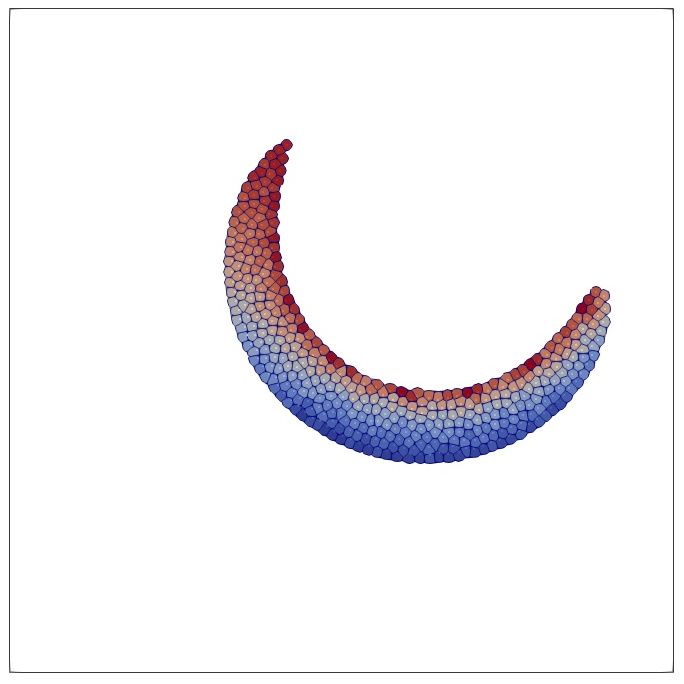}
\vskip5pt
\includegraphics[width=0.23\textwidth]{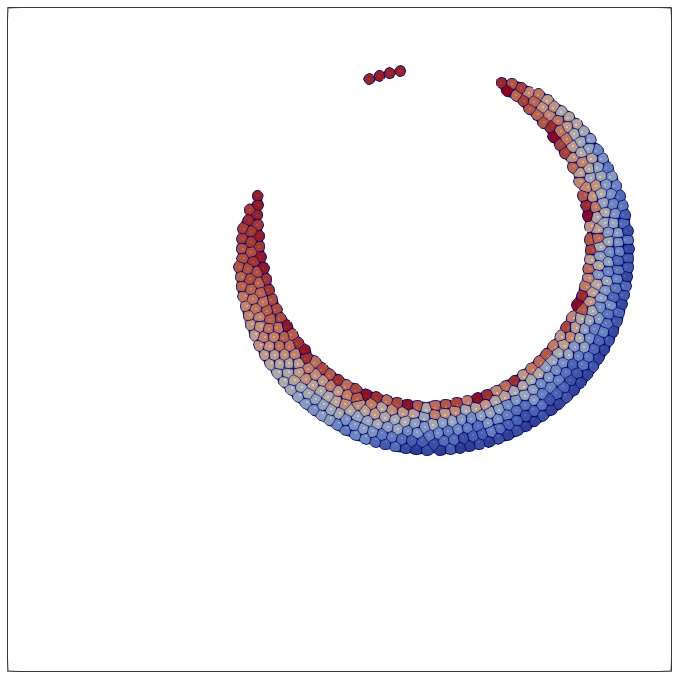}
\includegraphics[width=0.23\textwidth]{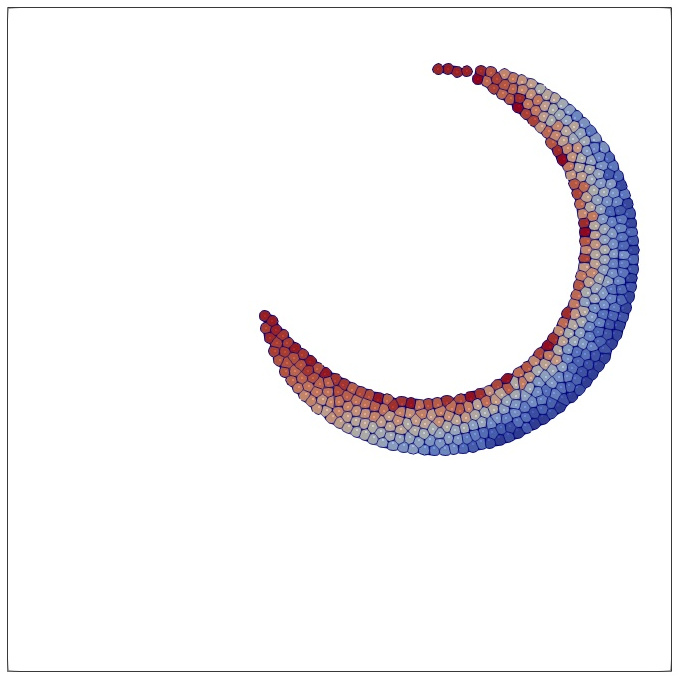}
\includegraphics[width=0.23\textwidth]{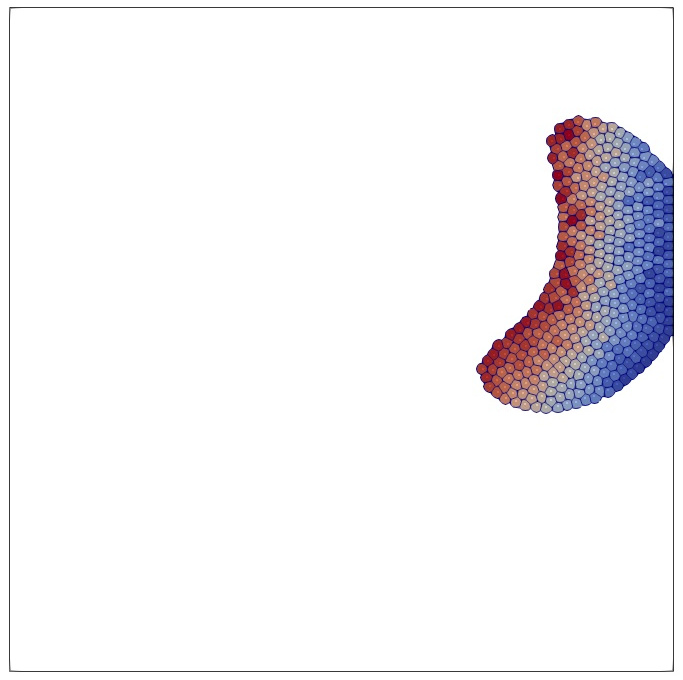}
\includegraphics[width=0.23\textwidth]{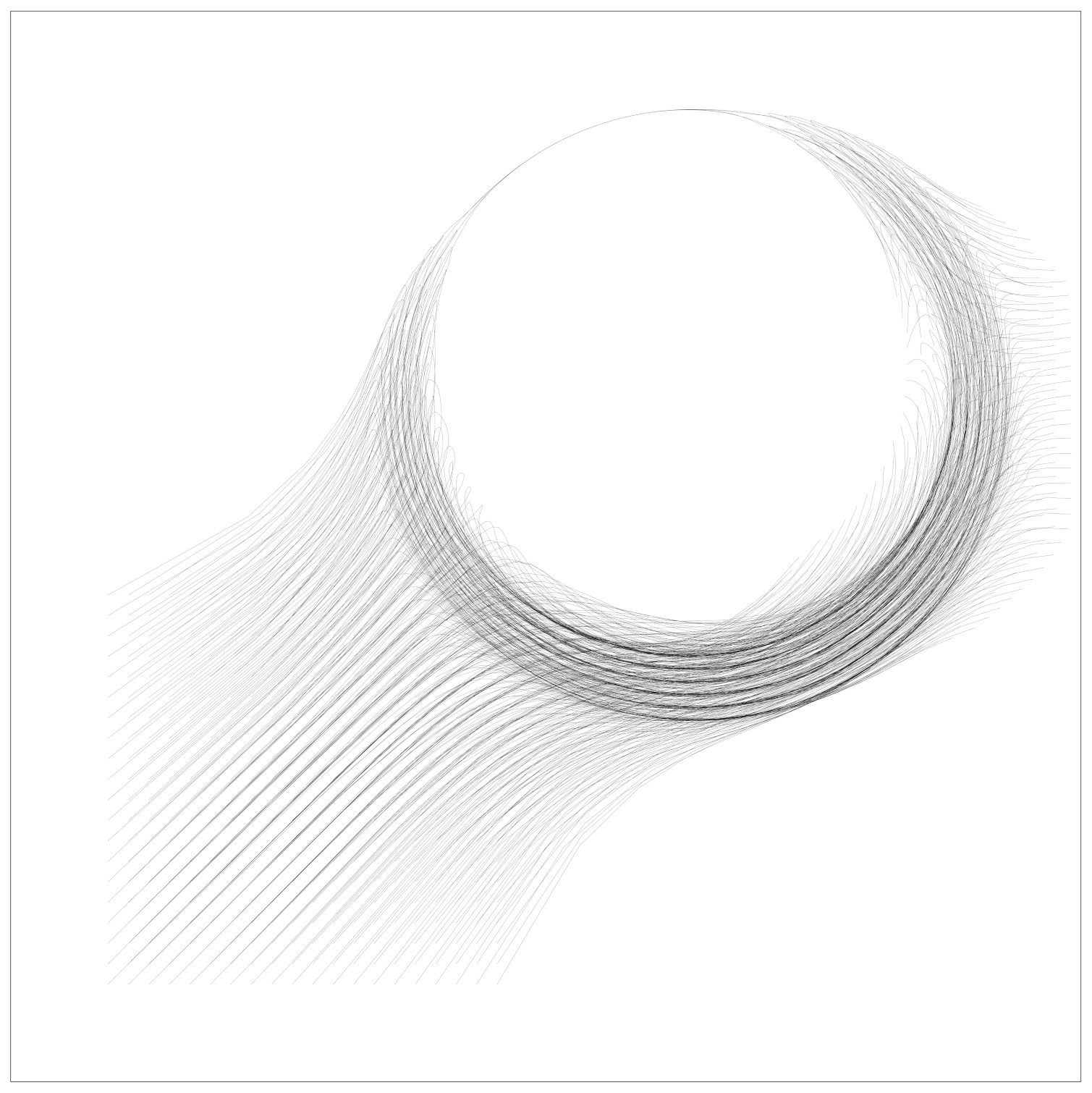}
\caption{On the first 7 images (top left to bottom right), the evolution of the ``charged" Laguerre cells (intersected with the support of $\rho_N(t)$) at several time steps for 400 particles in the convex domain $[-1;10]^2$. The final picture (bottom right) represents the full trajectories of the particles.\label{fig:con_100p}}
\end{figure}

\textbf{Evolution in a convex domain:} For this case, we use the simple  congestion penalization, $$F:\rho\in\mathcal{M}(\Omega)\mapsto\int_\Omega\chi_{[0;1]}(\rho(x))dx=\begin{cases}0 & \text{if } 0\leq\rho\leq dx\\ +\infty & \text{ otherwise}\end{cases}$$ with $\chi_{[0;1]}$ being the convex indicator function of $[0;1]$. Admissible population trajectories for the continuous problem cannot have a density higher than 1 at almost any time or position. The conclusions of part \ref{part:4} apply in this case, provided $|\Omega|>1$ to guarantee existence of dual solutions.  With $f\equiv\chi_{[0;1]}$, $f^*\equiv\max(.,0)$ is the positive part function on $\R$ and $(f^*)'=\mathds{1}_{\R^+}$ (almost everywhere). For $y\in(\R^d)^N$, the associated optimal density in $\mathcal{F}_\epsilon(y)$ is given on $\Lag{i}{y}{\phi}$ by the \textbf{\emph{(Density)}} condition:
$$\rho(x)=\mathds{1}_{\R^+}\left(\phi_i-\frac{\norm{x-y_i}^2}{2\epsilon}\right)=\begin{cases}1 & \text{if $||x-y_i||^2\leq2\epsilon\phi_i$}\\0 &\text{otherwise} \end{cases}$$
and the charged Laguerre cells (intersected with the support of $\rho$) are the intersection of the actual Laguerre cells, with the respective balls $B(y_i,\sqrt{2\epsilon\phi_i})$. Although Proposition \ref{prop:cv_j_j*} does not apply in this case, one can expect these cells to give a good idea of the support of the limit measure $e_t\#\Qmin$, and we have highlighted them on the pictures instead of the actual player's positions for this reason.
\newline

For this first experiment, we chose the convex domain $\Omega=[-1;10]^2$ and a "potential energy"
\begin{equation*}
\label{eq:potential}
G:Q\in\Prob(\Gamma)\mapsto\int_\Gamma\int_0^T V(\gamma(t))dt + \Phi(\gamma(T)) dQ(\gamma)
\end{equation*} with $V(x)=(\norm{x-(6,6)}^2-9)^2$ and $\Phi(x)=\norm{x-(11,6)}^2$. 400 players, each of mass 1/40 (for a total mass of 10, in order to have visible charged laguerre cells) start aligned on a regular grid in the square $[0;4]\times[0;4]$. The potential term then drove them towards the circle $\mathcal{C}((6,6),3)$ in the course of their trajectory and to the point $(11,6)$ at the end. The images on Figure \ref{fig:con_100p} were obtained for the values of the parameters $\epsilon=0.01$, $\delta=1/64$ and $T=15$.
\newline

In this case, the hypothesis of a Lipschitz velocity field for the continuous solution of $\M$ does not seem to hold in the experiment, as all players do not go around the same side of the circle $\mathcal{C}$. However this seems to be the only point of splitting for our optimal trajectories, which suggests that $spt(\Qmin)$ should still be of dimension 2. In such a case, any sequence $\epsilon_N$ dominating $\ln(N)/N$ should be suitable to obtain the convergence of Proposition \ref{prop:gamma_cv_dd}.
\newline
\begin{figure}[t]
\begin{subfigure}{\linewidth}
	\includegraphics[width=0.3\linewidth]{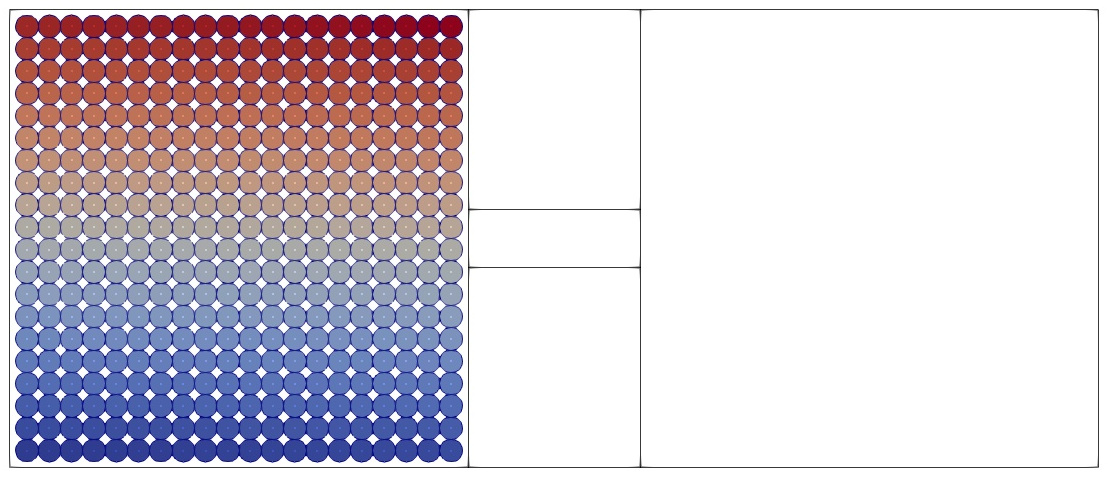}\hfill
	\includegraphics[width=0.3\linewidth]{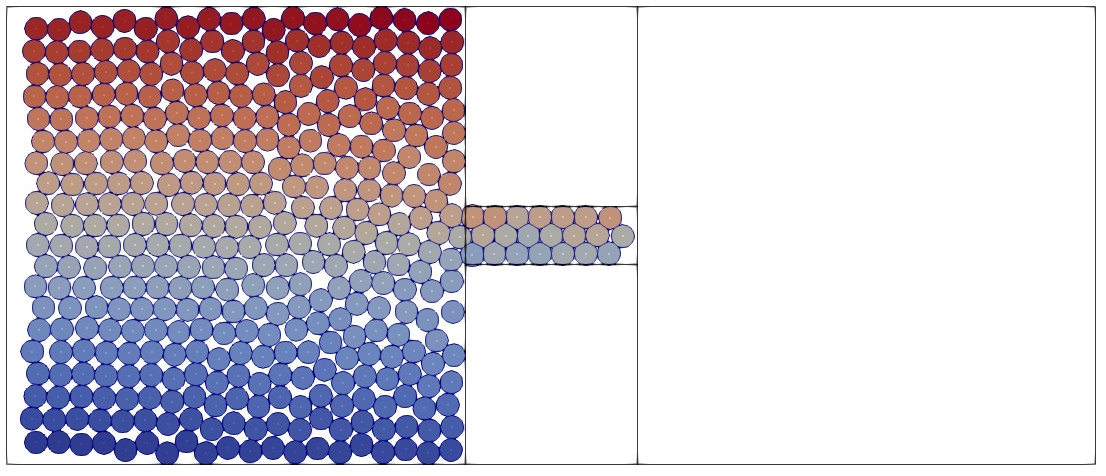}\hfill
	\includegraphics[width=0.3\linewidth]{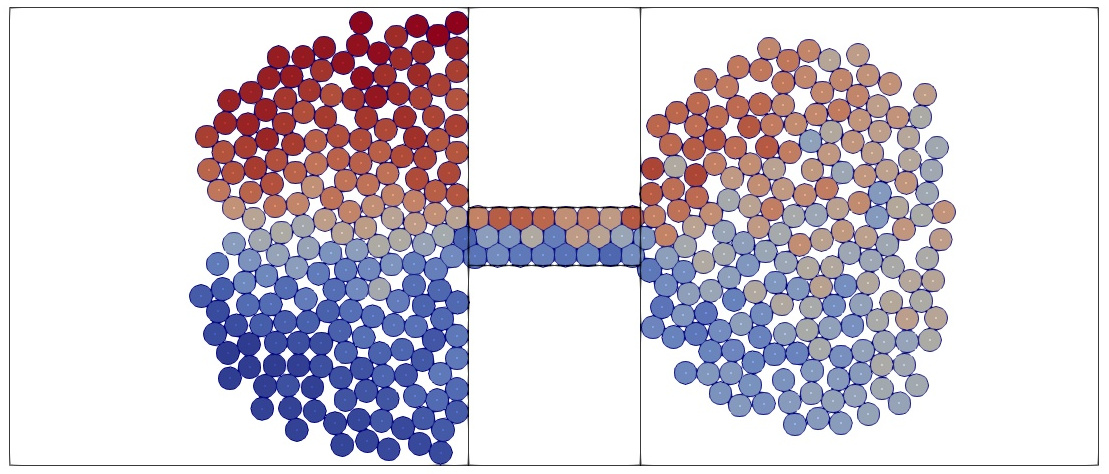}
	\vskip5pt
	\includegraphics[width=0.3\linewidth]{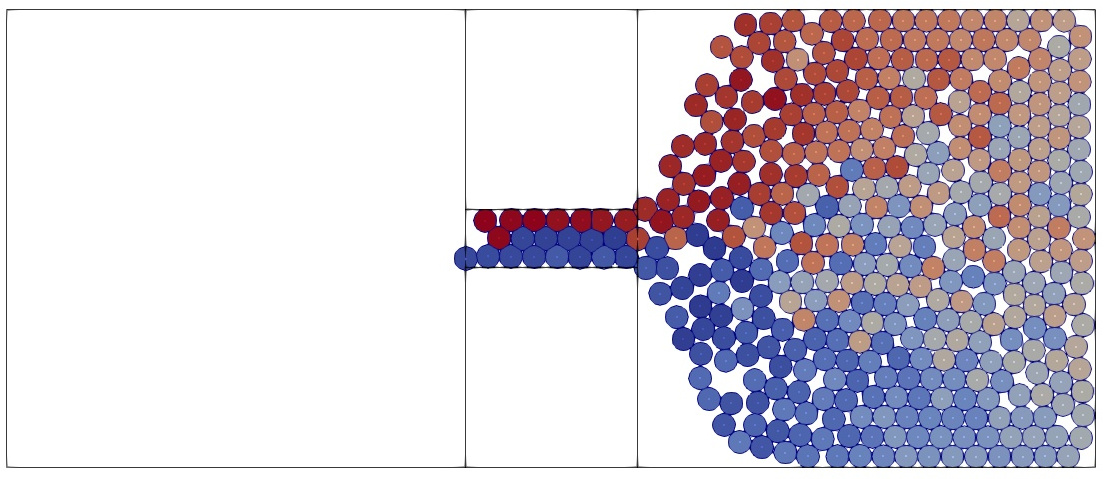}\hfill
	\includegraphics[width=0.3\linewidth]{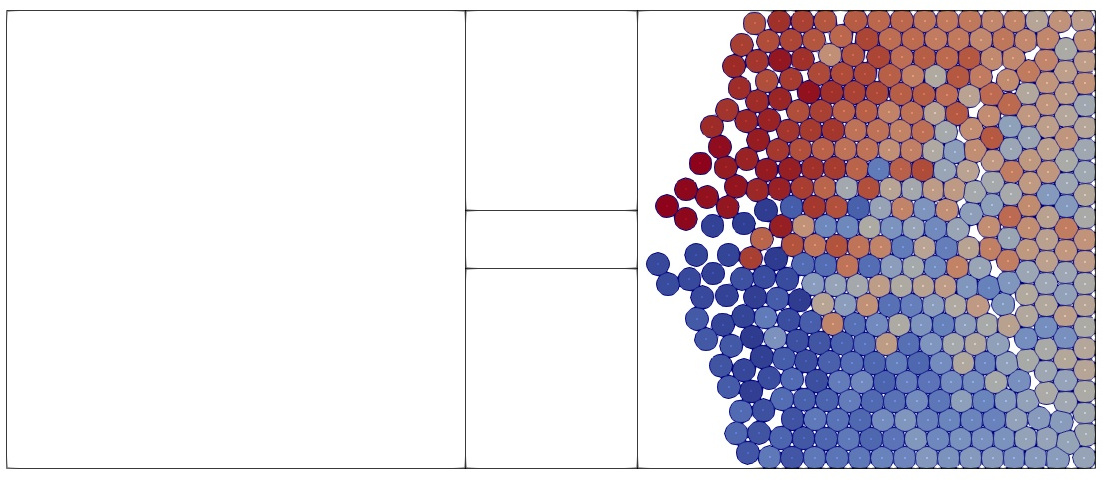}\hfill
	\includegraphics[width=0.3\linewidth]{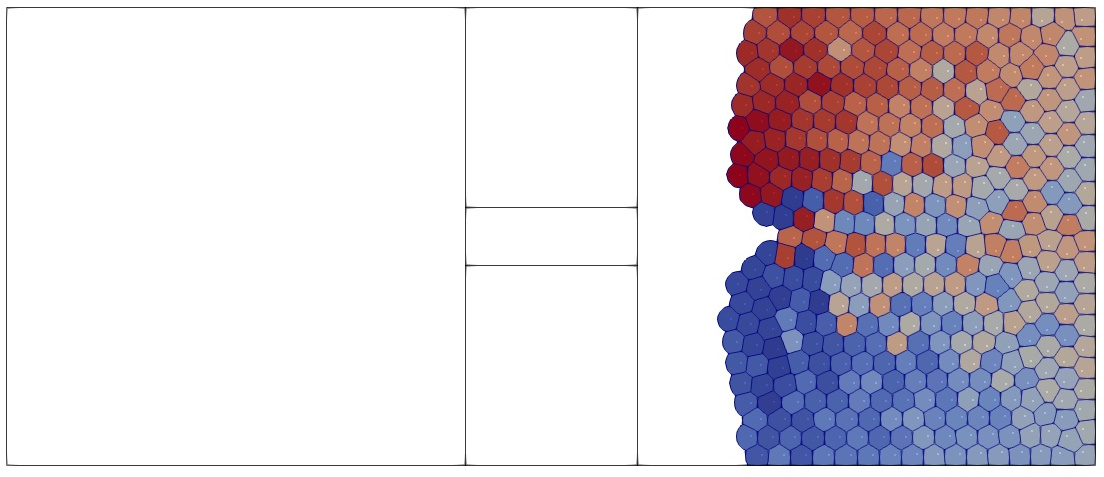}
\end{subfigure}
\vskip5pt
\begin{subfigure}{\linewidth}
	\centering
	\includegraphics[width=\linewidth]{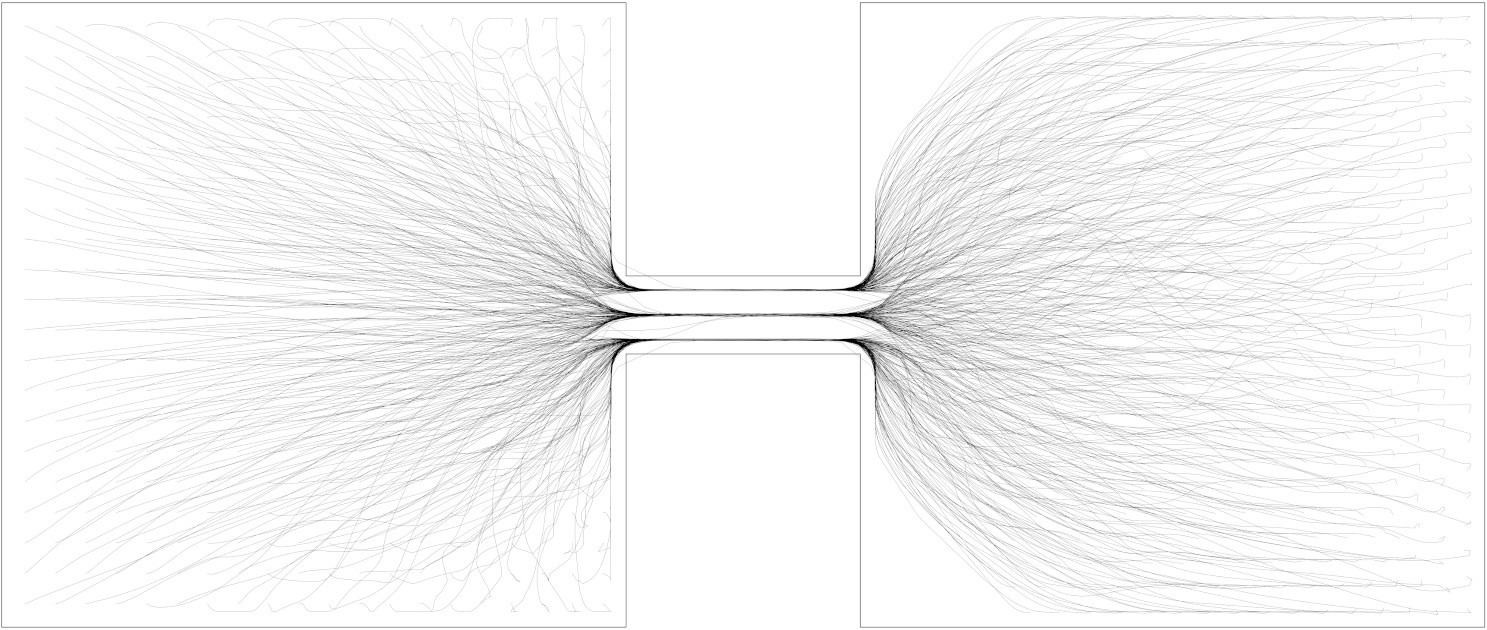}
\end{subfigure}
\caption{On the top six images are represented the positions and ``charged" Laguerre cells of 400 particles moving in $\Omega$. The bottom picture shows the trajectories of all the particles.\label{fig:noncon_400p}}
\end{figure}

\textbf{Evolution in a non-convex domain:} Here, we had to adapt our congestion term, as the Newton algorithm finding the optimal weights for $\mathcal{F}_\epsilon$ did not always converge for diracs too far away from $\Omega$ (which was the case for some particles not using the corridor before convergence was reached). To make optimization easier we fix a small maximum density $0<m\ll1$ for the area outside $\Omega$ but inside its convex envelope $\mathrm{conv}(\Omega)$, and 1 inside $\Omega$. This results in the congestion penalization:
$$F:\rho\in\mathcal{M}(\mathrm{conv}(\Omega))\mapsto\int_{\mathrm{conv}(\Omega)} f(x,\rho(x))dx$$ 
where $f:(x,\rho)\in \mathrm{conv}(\Omega)\times\R\mapsto
\begin{cases}
0 & \text{if $0\leq\rho\leq 1$ and $x\in\Omega$}\\
0 & \text{if $0\leq\rho\leq m$ and $x\in \mathrm{conv}(\Omega)\backslash\Omega$}\\
+\infty & \text{otherwise}\end{cases}
$

Although this isn't quite the framework of Propositions \ref{prop:opti_semid} to \ref{prop:grad_bary}, these can be easily adapted to this form of congestion. The support of the Moreau projection will still be an intersection of balls with the Laguerre cells, but the value of the optimal density $\rho$ will not be 1 everywhere on this support. Instead, $\rho(x)=m$ a.e. on $\mathrm{conv}(\Omega)\backslash\Omega$, giving us larger charged Laguerre cells for the points passing near the border (or outside) of $\Omega$. For low values of the outside density $m$, only very few particles can fit outside the corridor, and we should recover the strong penalization of the convex example.
\newline

In this case, our particles evolved on the domain $\Omega=\Omega_1\bigcup\Omega_2\bigcup\Omega_3$ constituted of two rooms, $\Omega_1=[0;8]^2$ and $\Omega_3=[11;19]\times[0;8]$ connected by a narrow corridor, $\Omega_2=[8;11]\times[3.5;4.5]$. These particles are driven by a potential energy featuring a fast marching distance on $\mathrm{conv}(\Omega)$:

\begin{equation*}
G:Q\in\Prob(\Gamma)\mapsto\int_\Gamma \Phi(\gamma(T))dQ(\gamma)
\end{equation*} 
with $\Phi$ being solution of the Eikonal equation on $\mathrm{conv}(\Omega)$: $$\begin{cases}
\norm{\nabla \Phi(x)}=1 \qquad\text{ on $\Omega$.}\\
\norm{\nabla \Phi(x)}=v\qquad \text{ outside.}\\
\Phi(18,1)=\Phi(18,7)=0
\end{cases}$$
and $v$ being a small value of the velocity, outside the corridor. Such a potential ``guides", so to speak, the players toward the closest point between (18,1) and (18,7), while encouraging them to move inside $\Omega$. Notice that, unlike the one made for $F$, this prescription $\norm{\nabla V(x)}=v$ outside $\Omega$ is dictated by the theory since our discrete trajectories could pass outside the corridor and we do not regularize $G$, therefore it has to be continuous on, at least, $\mathrm{conv}(\Omega)$. However, even for our value $v=0.1$, trajectories leaving $\Omega$ were, in the end, mostly rejected by the optimization, provided the maximum time $T$ is large enough for them to wait their turn and use the corridor.
\newline

To obtain Figure \ref{fig:noncon_400p}, we ran the optimization for 400 particles, each of mass 1/8, starting on a regular grid over the first square $\Omega_1$. The trajectories on these images were obtained for $\epsilon=0.1$, a time step of $\delta_N=1/2^{8}$ and a maximum time $T=600$. We chose to take a fairly strong congestion penalization outside the corridor, with $m=10^{-3}$, putting a much weaker penalization on the speed via the Eikonal equation, with a value $v=0.1$, but still resulting in very few particles crossing the borders of $\Omega$.

\bibliographystyle{Biblio/siam.bst}
\bibliography{Biblio/biblio.bib}
\end{document}